\theoremstyle{plain}
\newtheorem{theorem}{Theorem}[section]
\newtheorem{corollary}[theorem]{Corollary}
\newtheorem{proposition}[theorem]{Proposition}
\newtheorem{lemma}[theorem]{Lemma}
\theoremstyle{definition}
\newtheorem{definition}[theorem]{Definition}
\newcommand{\seqnum}[1]{\href{http://oeis.org/#1}{\underline{#1}}}
\newcommand{\sunderb}[2]{
  \mathclap{\underbrace{\makebox[#1]{$\cdots$}}_{#2}}
}
\tikzstyle arrowstyle=[scale=1.5]
\tikzstyle directed=[postaction={decorate,decoration={markings,
    mark=at position .65 with {\arrow[arrowstyle]{latex}}}}]
\begin{document}

\title{On the unimodality of convolutions of sequences of binomial coefficients
}

\author{Tricia Muldoon Brown\\
Georgia Southern University\\
\href{mailto:tmbrown@georgiasouthern.edu}{\tt tmbrown@georgiasouthern.edu}
}

\date{}

\maketitle
\begin{abstract}
We provide necessary and sufficient conditions on the unimodality of a convolution of two sequences of binomial coefficients preceded by a finite number of ones.  These convolution sequences arise as as rank sequences of posets of vertex-induced subtrees for a particular class of trees.  The number of such trees whose poset of vertex-induced subgraphs containing the root is not rank unimodal is determined for a fixed number of vertices $i$.
\end{abstract}
  
\section{Introduction}\label{sec_intro}

Unimodality of a sequence is an often-studied property where we say a sequence $\{s\}_{i\geq 0}$ is \emph{unimodal} if for some $k\geq 0$, we have
\begin{equation*}
s_0  \leq \cdots \leq s_{k-1}  \leq s_k \geq s_{k+1} \geq s_{k+2} \geq \cdots.
\end{equation*}
A classic example of a unimodal sequence is the sequence of binomial coefficients $\{{n\choose i}\}_{i\geq 0}$.  The question of unimodality of a sequence is a classic combinatorial problem.  Stanley~\cite{Stanley} provided a toolbox of techniques for proving unimodality and log-concavity which was updated by Brenti~\cite{Brenti}.  Further, several researchers have previously studied unimodality of other sequences involving binomial coefficients, including Tanny and Zuker~\cite{Tanny_Zuker} who establish the log-concavity and hence the unimodality of the sequence $\{{n-r\choose r}\}_{r\geq 0}$.  Belbachir, Bencherif, and Szalay~\cite{Belbachir_Bencherif_Szalay} find similar results for the sequence $\{{n+k\choose \beta k}\}_{k\geq 0}$ for some natural number $\beta \geq 2$.  Their conjecture on unimodality along a ray of Pascal's triangle is proven by Su and Wang~\cite{Su_Wang}.  Using the reflection principle, Sagan~\cite{Sagan} shows the unimodality of a sequence of products of binomial coefficients $\{{n\choose \ell -i}{n\choose i}\}_{0\leq i \leq \ell}$ for any $n$ and $\ell$, as well as the sequence of differences of products $\{{n\choose \ell -k}{n\choose k} - {n\choose \ell -k-1}{n\choose k-1}\}_{k\geq 0}$.  In particular, here we are interested in the unimodality of the convolution of two sequences of binomial coefficients preceded by a finite number of ones.  We define two sequences and their convolution as follows:

\begin{definition}
For integers $m,n,p,q\geq 0$, let
\begin{equation*}
\{s_i\}_{i\geq 1} = \left(\underbrace{1,1,\ldots,1}_p, {m\choose 0}, {m\choose 1}, \ldots, {m\choose m}, 0, 0, \ldots \right),
\end{equation*}
and 
\begin{equation*}
\{t_i\}_{i\geq 1} = \left(\underbrace{1,1,\ldots,1}_q, {n\choose 0}, {n\choose 1}, \ldots, {n\choose n}, 0, 0, \ldots \right).
\end{equation*}
Then define the sequence $\{r_i\}_{i\geq 1}$ to be the convolution of $\{s_i\}_{i\geq 0}$ and $\{t_i\}_{i\geq 1}$; that is, for $i\geq 1$,
\begin{equation*}
r_i = \sum_{j=1}^i s_j \cdot t_{i-j}.
\end{equation*}
\end{definition}

The main result provides necessary and sufficient conditions for $\{r_i\}_{i\geq 1}$ to be unimodal.  
\begin{theorem}\label{thm_seq_conditions}
For integers $m\geq n>0$ and $p,q\geq 0$, the sequence $\{r_i\}_{i\geq 1}$ unimodal if and only if at least one of the following conditions hold:
\begin{enumerate}
\item[i.] $m>q$,
\item[ii.] $n>p$,
\item[iii.] $m=n=2$, or
\item[iv.] $n=1$.
\end{enumerate}
\end{theorem}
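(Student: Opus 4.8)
The plan is to work with generating functions. Set $A(x)=1+x+\cdots+x^{p-1}+x^{p}(1+x)^{m}$ and $B(x)=1+x+\cdots+x^{q-1}+x^{q}(1+x)^{n}$, so that, up to the index shift, $\{r_i\}_{i\geq 1}$ is the coefficient sequence of $R(x):=A(x)B(x)$; note that $A$ and $B$ themselves have unimodal coefficient sequences (binomial coefficients prepended with a block of $1$'s, and $\binom{m}{0}=1$ makes the transition monotone). I will use throughout: (a) $\{r_i\}$ is unchanged under the swap $(p,m)\leftrightarrow(q,n)$, since convolution is commutative; and (b) the \emph{window-sum lemma}: if $(a_i)$ is unimodal then so is the width-$w$ window sum $b_i=\sum_{j=0}^{w-1}a_{i-j}$, because $b_i-b_{i-1}=a_i-a_{i-w}$ and unimodality of $(a_i)$ forces this difference to be nonnegative up to the mode, nonpositive far past it, and once negative it stays negative in the length-$w$ uncertain window in between. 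Consequently, multiplying a unimodal polynomial by $1+x+\cdots+x^{w-1}$ (write $[w]$ for this) or by $(1+x)^k$ (an $k$-fold width-$2$ window sum) preserves unimodality. I will also record the recursion $B(x)=xB_{q-1}(x)+1$ (where $B_{q-1}$ is $B$ with second parameter $q-1$), hence $R(x)=xR_{q-1}(x)+A(x)$, which lets us build $R$ up from $q=0$.

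For necessity, suppose none of (i)--(iv) holds, i.e.\ $n\geq 2$, $q\geq m$, $p\geq n$, and $(m,n)\neq(2,2)$. Expand $R(x)=[p][q]+x^{q}(1+x)^{n}[p]+x^{p}(1+x)^{m}[q]+x^{p+q}(1+x)^{m+n}$ and read off the coefficients of $x^{p+q}$, $x^{p+q+1}$, $x^{p+q+2}$; the conditions $q\geq m$, $p\geq n$ collapse the partial row-sums, e.g.\ $\sum_{k=1}^{q}\binom{m}{k}=2^{m}-1$, and using $\binom{m+n}{2}=\binom{m}{2}+\binom{n}{2}+mn$ one gets $r_{p+q+1}=2^{m}+2^{n}-1$, $r_{p+q+2}=2^{m}+2^{n}-2$, and $r_{p+q+3}=2^{m}+2^{n}-3+(m-1)(n-1)$. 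Since $m\geq n\geq 2$ with $(m,n)\neq(2,2)$ forces $(m-1)(n-1)\geq 2$, this yields $r_{p+q+1}>r_{p+q+2}<r_{p+q+3}$, an interior strict local minimum (these indices are genuinely interior since $m+n\geq 4$), so $\{r_i\}$ is not unimodal.

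For sufficiency, assume some condition holds. If $n=1$ then $B(x)=1+x+\cdots+x^{q+1}$, so $R(x)=A(x)\,[q+2]$ is unimodal by the window-sum lemma. If $m=n=2$, then $p=0$ or $q=0$ puts us under (i) or (ii), so we may assume $p,q\geq 1$ and (by the swap) $p\geq q$; here $A(x)=[p+3]+x^{p+1}$ and $B(x)=[q+3]+x^{q+1}$, so $R(x)=[p+3][q+3]+x^{q+1}[p+3]+x^{p+1}[q+3]+x^{p+q+2}$, and partitioning $\{0,\dots,p+q+4\}$ at the $O(1)$ breakpoints $q+1,q+2,p+1,p+2,p+q+2,\dots$ and reading off each summand block-by-block shows $R$ rises then falls. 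By the swap, (ii) reduces to (i), so assume $m>q$; write $R=[p]B+x^{p}(1+x)^{m}B$, two unimodal sequences, and expand $r_{i+1}=[x^i]([p][q])+\sum_{j=0}^{p-1}\binom{n}{i-q-j}+\sum_{j=0}^{q-1}\binom{m}{i-p-j}+\binom{m+n}{i-p-q}$. The point is that $q\leq m-1$ forces the width-$q$ window sum of Pascal's row $m$ to have its single hump (and its tail) land strictly inside the rising (resp.\ falling) region of the dominant term $\binom{m+n}{i-p-q}$; combined with the easy behavior of $[p][q]$ and of the row-$n$ window sum, a block-by-block check on the breakpoints among $q$, $p$, $p+q$, $p+q+n$, $p+q+m$ (with sub-breakpoints inside the row-$m$, row-$n$, row-$(m+n)$ pieces) shows $r_{i+1}-r_i$ is nonnegative then nonpositive. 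Alternatively, induct downward on $q$ from $q=m-1$ via $R=xR_{q-1}+A$, base case $R=A(1+x)^{n}$.

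The main obstacle is precisely cases (i)/(ii): unlike the others, one cannot simply invoke closure of unimodality under window sums, since $R$ is a \emph{sum} (not a window sum) of two unimodal sequences, and such sums need not be unimodal. The real content is the inequality showing that, wherever the row-$m$ window-sum piece is decreasing, the increase contributed by $\binom{m+n}{i-p-q}$ (equivalently, the steepness of Pascal's row $m+n$ strictly below its center, which $q<m$ guarantees is not yet exhausted) outweighs that decrease; organizing this across the several index blocks is the bookkeeping heart of the proof.
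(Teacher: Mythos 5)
Your necessity argument is correct and is essentially the paper's: the three values $r_{p+q+1}=2^m+2^n-1$, $r_{p+q+2}=2^m+2^n-2$, $r_{p+q+3}=2^m+2^n-3+(m-1)(n-1)$ check out (they are the paper's $d_{p+q+2}=-1$ and $d_{p+q+3}=mn-m-n$ in disguise), and $(m-1)(n-1)\geq 2$ under the stated hypotheses gives the interior dip. Your window-sum lemma is also correct and gives a genuinely cleaner treatment of case (iv) ($B=[q+2]$, so $R=A\cdot[q+2]$ is a window sum of a unimodal sequence) and of the boundary cases $p=0$ or $q=0$; the $m=n=2$ case is only sketched, but the block-by-block bookkeeping you describe is the same as the paper's and is routine.

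The genuine gap is the main sufficiency case, conditions (i)/(ii). There you correctly identify that $R$ is a \emph{sum} of unimodal pieces, not a window sum, and that the whole difficulty is to show that wherever the row-$m$ window-sum piece $\sum_{j=0}^{q-1}\binom{m}{i-p-j}$ is decreasing, the increase of $\binom{m+n}{i-p-q}$ dominates --- but you then assert this ("a block-by-block check \ldots shows $r_{i+1}-r_i$ is nonnegative then nonpositive") without proving the inequality. This is precisely where the paper spends all of Section~\ref{sec_binomial} and the first half of Section~\ref{sec_mainresult}: it proves that $\binom{m+n}{j}-\binom{m+n}{j-1}-\binom{m}{j}+\binom{m}{j+q}$ is unimodal in $j$ on $0\leq j\leq\lfloor(m+n)/2\rfloor$ (Proposition~\ref{prop_c_unimodular}, via a binomial-recurrence/triangular-array induction), and then shows via Propositions~\ref{prop_less_zero} and~\ref{prop_greater_zero} that appending the row-$n$ terms $-\binom{n}{j}+\binom{n}{j+p}$ cannot flip the sign back; Proposition~\ref{prop_greater_zero} in particular is a nontrivial induction on $j$. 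None of that content appears in your proposal, and your fallback ("induct downward on $q$ \ldots via $R=xR_{q-1}+A$, base case $R=A(1+x)^n$") does not repair it: the base case is fine, but the inductive step adds the unimodal sequence $A$ to the unimodal sequence $xR_{q-1}$, and a sum of two unimodal sequences need not be unimodal --- indeed the whole point of the necessity direction is that exactly such a sum fails to be unimodal when $q\geq m$ and $p\geq n$, so the induction cannot go through on unimodality alone. As it stands the proposal proves necessity and the peripheral cases but not the central implication ``$m>q$ or $n>p$ implies unimodal.''
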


As we shall see, the sequence $\{r_i\}_{i\geq 1}$ is motivated by graph theoretical results, so in Section~\ref{sec_tree} we formally define a class of trees composed of two broom graphs and the associated poset of connected, vertex-induced subgraphs, determining the rank sequence and sequence of first differences.  Section~\ref{sec_binomial} provides some intermediate results on sequences of sums and differences of binomial coefficients, while Section~\ref{sec_mainresult} proves the main result and consequently an analogous result for a poset of connected, vertex-induced subgraphs of two broom graphs.  We conclude in Section~\ref{sec_enumeration} with a closed formula to count the number of such trees with a fixed number of vertices whose poset is not rank unimodular.  We also note, throughout, $m,n,p$ and $q$ will be non-negative integers, $r$ will always refer to the root vertex of the tree in question, and all subtrees will be connected.

\section{Rank sequences and operations on rooted trees}\label{sec_tree}
Given a finite connected graph $G$, let $C(G)$ denote the poset of all connected, vertex-induced subgraphs of $G$ partially ordered by inclusion. This poset has been investigated by Leclerc~\cite{Leclerc} and Trotter and Moore~\cite{Trotter_Moore}, respectively, who give the dimension of this poset in the case of trees and graphs, respectively.  These subgraph posets appeared more recently in a problem solved by Steelman~\cite{Steelman} who gave conditions for when $C(G)$ is a lattice.  Further work has been done by K\'ezdy and Seif~\cite{Kezdy_Seif} on isomorphism conditions, and Vince and Wang~\cite{Vince_Wang} for an infinite set of non-Sperner subgraph posets.  Dropping the condition that the subgraphs must be vertex-induced, Smith and Tomon~\cite{Smith_Tomon} prove the poset of all connected subgraphs is Sperner.  Here we wish to consider a class of vertex-induced, connected subgraph posets.  Suppose $T$ is a rooted tree.  Let $C(T,r)$ be the poset of connected, vertex-induced subtrees of $G$ containing the root $r$ partially ordered by inclusion.  This poset and the more general poset were studied by Jacobson, K\'ezdy, and Seif~\cite{Jacobson_Kezdy_Seif} using the poset $C(T,r)$ to show $C(G)$ need not be Sperner.  

Each poset $C(T,r)$ has an associated sequence.
\begin{definition}
The \emph{rank sequence} $(r_i)_{i\geq 0}$ of a poset $P$ is given by the number of elements of rank $i$ in the poset.  These values $r_i$ are also called \emph{Whitney numbers} of the poset.  A poset is \emph{rank unimodal} if for some $0\leq k \leq n$, the rank sequence is unimodal.
\end{definition}

In particular, the Whitney number $r_i$ of $C(T,r)$ is given by the number of connected, vertex-induced subtrees of $T$ rooted at the root vertex with exactly $i$ vertices.

Examples of rank unimodal posets are found in many commonly studied posets, such as the Boolean lattice and its $q$-analogues, as well as the partition lattice.  The goal to characterize all trees whose poset of connected, vertex-induced subgraphs is unimodal was proposed by Jacobson, K\'ezdy, and Seif~\cite{Jacobson_Kezdy_Seif} .  They provide the only example of a tree whose vertex-induced subposet $C(T,r)$ is not unimodal when the number of vertices in the tree is less than or equal to 11, leading to questions on the prevalence of such non-unimodal posets.  This example, see Figure~\ref{fig_nuniexample}, is the motivating example for this work.

\begin{figure}
	\centering
	\begin{tikzpicture}
	\fill (3,4) circle (3pt);
	\fill (2,3) circle (3pt) ;
	\fill (1,2) circle (3pt) ;
	\fill (0,1) circle (3pt) ;
	\fill (1,1) circle (3pt) ;
	\fill (2,1) circle (3pt) ;
	\fill (4,3) circle (3pt) ;
	\fill (5,2) circle (3pt) ;
	\fill (6,1) circle (3pt) ;
	\fill (7,0) circle (3pt);
	\fill (5,0) circle (3pt);
	\draw[directed] (3,4)--(2,3);
	\draw[directed] (3,4)--(4,3);
	\draw[directed] (2,3)--(1,2);
	\draw[directed] (4,3)--(5,2);
	\draw[directed] (1,2)--(0,1);
	\draw[directed] (1,2)--(1,1);
	\draw[directed] (1,2)--(2,1);
	\draw[directed] (5,2)--(6,1);
	\draw[directed] (6,1)--(7,0);
	\draw[directed] (6,1)--(5,0);
	\end{tikzpicture}
\caption{The rooted tree on 11 vertices whose poset $C(T,r)$ has the non-unimodal rank sequence $(1,2,3,6,10,11,10,11,10,5,1)$.}
\label{fig_nuniexample}
\end{figure}
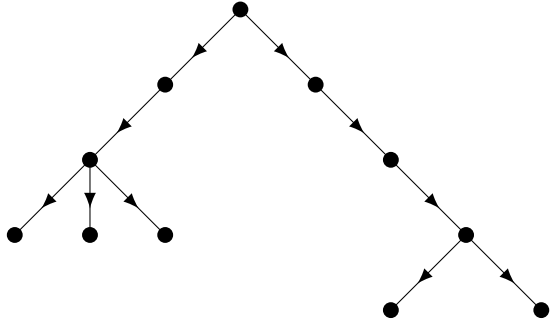

We begin by defining a simple tree which is illustrated in Fig~\ref{fig_broom}.

\begin{definition}
Given integers $m>0$ and $k\geq0$, the \emph{broom graph}, $B_{m,k}$, is a rooted, directed tree consisting of a path of length $k$ directed into a vertex with $m$ pendant vertices.  The root is the origin of the path and all edges are directed away from the root.
\end{definition}

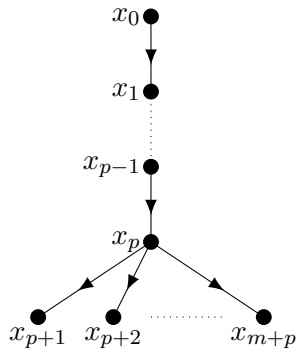
\begin{figure}
	\centering
	\begin{tikzpicture}
	\fill (1.5,4) circle (3pt) node[left]{$x_0$};
	\fill (1.5,3) circle (3pt) node[left]{$x_1$};
	\fill (1.5,2) circle (3pt) node[left]{$x_{p-1}$};
	\fill (1.5,1) circle (3pt) node[left]{$x_{p}$};
	\fill (0,0) circle (3pt) node[below]{$x_{p+1}$};
	\fill (1,0) circle (3pt) node[below]{$x_{p+2}$};
	\fill (3,0) circle (3pt) node[below]{$x_{m+p}$};
	\draw[directed] (1.5,4)--(1.5,3);
	\draw[style=dotted] (1.5,3)--(1.5,2);
	\draw[directed] (1.5,2)--(1.5,1);
	\draw[directed] (1.5,1)--(0,0);
	\draw[directed] (1.5,1)--(1,0);
	\draw[directed] (1.5,1)--(3,0);
	\draw[style=dotted] (1.5,0)--(2.5,0);
	\end{tikzpicture}
\caption{The broom graph $B_{m,p}$}
\label{fig_broom}
\end{figure}

It is not difficult to determine the rank sequence for the poset of connected, vertex-induced subtrees of a broom graph. 
\begin{lemma}
For $m>0$ and $p\geq 0$, the poset of connected, vertex-induced subtrees containing the root of a broom graph, $C(B_{m,p},r)$, is rank unimodal.
\end{lemma}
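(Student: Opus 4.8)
The plan is to compute the rank sequence of $C(B_{m,p},r)$ explicitly by a short counting argument and then observe that the resulting sequence is manifestly unimodal. Label the vertices of $B_{m,p}$ as in Figure~\ref{fig_broom}: the directed path is $x_0 = r, x_1, \ldots, x_p$, and $x_{p+1}, \ldots, x_{m+p}$ are the $m$ pendant vertices adjacent to $x_p$. The key structural observation is that any connected, vertex-induced subtree $S$ containing $r$ must contain an initial segment $x_0, x_1, \ldots, x_j$ of the path for some $0 \le j \le p$ (otherwise $S$ is disconnected), and moreover if $S$ contains $x_p$ or any pendant vertex then it must contain all of $x_0, \ldots, x_p$. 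Hence each such $S$ is determined either by a choice of $j < p$ (the subtree $\{x_0,\ldots,x_j\}$) or by the full path $\{x_0,\ldots,x_p\}$ together with an arbitrary subset of the $m$ pendant vertices.

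Counting these subtrees by their number of vertices $i = |V(S)|$: for $1 \le i \le p$ the unique subtree on $i$ vertices is the path $x_0, \ldots, x_{i-1}$, so $r_i = 1$; and for $p+1 \le i \le m+p+1$ a subtree on $i$ vertices consists of the full path $x_0, \ldots, x_p$ together with a choice of $i-(p+1)$ of the $m$ pendants, so $r_i = \binom{m}{i-p-1}$. Therefore the rank sequence of $C(B_{m,p},r)$ is
\[
(r_i)_{i\ge1} = \bigl(\underbrace{1,1,\ldots,1}_{p}, \binom{m}{0}, \binom{m}{1}, \ldots, \binom{m}{m}, 0, 0, \ldots\bigr),
\]
which is exactly the sequence $\{s_i\}_{i\ge1}$ defined in Section~\ref{sec_intro}. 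Unimodality is then immediate: the row $\binom{m}{0}, \binom{m}{1}, \ldots, \binom{m}{m}$ is unimodal, and since $\binom{m}{0} = 1$, prepending the block of $p$ ones merely lengthens the weakly increasing initial run. Concretely, $r_1 \le r_2 \le \cdots \le r_{p+1+\lfloor m/2\rfloor} \ge \cdots \ge r_{m+p+1}$, and all later terms vanish, so $(r_i)_{i\ge1}$ is unimodal and $C(B_{m,p},r)$ is rank unimodal.

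I do not expect a genuine obstacle in this lemma; essentially all of the content is the combinatorial description of the subtrees. The only point requiring a little care is the bookkeeping at the boundary index $i = p+1$: one must check that the single ``path-only'' subtree $\{x_0,\ldots,x_p\}$ coincides with the ``full path plus no pendants'' subtree counted by $\binom{m}{0}$, so that the two ranges of $i$ above fit together without a gap or an overlap. Once that is verified the stated rank sequence, and hence unimodality, follows.
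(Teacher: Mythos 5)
Your proposal is correct and follows essentially the same route as the paper: both arguments identify the rank sequence of $C(B_{m,p},r)$ as $p$ ones followed by the binomial coefficients $\binom{m}{0},\ldots,\binom{m}{m}$ (the unique rooted path subtrees for $i\le p$, then the full handle plus a choice of $i-p-1$ pendants), and then conclude unimodality from the unimodality of the binomial coefficients together with $\binom{m}{0}=1$. Your extra care at the boundary index $i=p+1$ is sound and consistent with the paper's count.
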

\begin{proof}
A broom without a handle, $B_{m,0}$, consists of a root vertex and $m$ pendant vertices.  The rank sequence of $C(B_{m,0},r)$ is given by the binomial coefficients, as the rooted subtrees with $i$ vertices are chosen by selecting $i-1$ edges from the set of $m$ pendant edges, that is, for $B_{m,0}$ we have $r_i = {m\choose i-1}$.  More generally if the broom has a handle of length $p$ and $m$ pendant vertices, because all subtrees in $B_{m,p}$ must be rooted, there is exactly one subtree with $i$ vertices for $1\leq i \leq p$.  All remaining subtrees in $B_{m,p}$ with $i>p$ vertices are subtrees of $B_{m,0}$ with $i-p$ vertices and a handle of $p$ vertices.  Thus the rank sequence is
\begin{equation*}
\left(\underbrace{1,1,\ldots,1}_p, {m\choose 0}, {m\choose 1}, \ldots, {m\choose m}\right).
\end{equation*}
As mentioned in Section~\ref{sec_intro}, it is well-known that binomial coefficients are unimodal, see Stanley~\cite{Stanley} for example, and hence $C(B_{m,p},r)$ is rank unimodal. 
\end{proof}

As the example provided by Jacobson, K\'ezdy, and Seif is two broom graphs, $B_{3,2}$ and $B_{2,3}$, whose root vertices have been identified, the question we wish to consider is for a tree $T$ composed of two broom graphs $B_{m,p}$ and $B_{n,q}$ whose root vertices have been identified, when is the subtree poset $C(T,r)$ rank unimodal?

We will classify a set of rooted trees by the unimodality of their rank sequences, so first we formalize an operation on rooted trees.

\begin{definition}
We \emph{merge} two rooted trees, $T_1$ and $T_2$, respectively, with root vertices, $v_1$ and $v_2$, respectively, by identifying the roots $v_1=v_2$.  We denote the new tree as $T_1 \cdot T_2$. 
\end{definition}

This operation on a pair of trees leads to an analogous operation on the rank sequences of their posets of vertex-induced subtrees containing the root.

\begin{lemma}\label{lemma_stretchmerge} 
If $(t_i)_{i\geq 1}$ is the rank sequence of $C(T,r_T)$ for a rooted tree $T$ and $\{s_i\}_{i\geq 1}$ is the rank sequence of $C(S,r_S)$ for a rooted tree $S$, then the rank sequence $(r_i)_{i\geq 1}$ of $C(T\cdot S,r)$ is given by the convolution $r_{i} =\sum_{j=0}^{i-1} t_{j+1} s_{i-j}$.
\end{lemma}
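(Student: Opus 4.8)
The plan is to count, for each $i \geq 1$, the connected vertex-induced subtrees of $T \cdot S$ that contain the common root $r$ and have exactly $i$ vertices, and to show this count equals $\sum_{j=0}^{i-1} t_{j+1} s_{i-j}$. First I would observe that since $T$ and $S$ share only the vertex $r$ and $T \cdot S$ is a tree, every connected vertex-induced subgraph $U$ of $T \cdot S$ containing $r$ decomposes uniquely: letting $U_T = U \cap V(T)$ and $U_S = U \cap V(S)$, both $U_T$ and $U_S$ contain $r$, and because $r$ is a cut vertex separating $T \setminus r$ from $S \setminus r$, the induced subgraph on $U_T$ is connected, the induced subgraph on $U_S$ is connected, and $U$ is connected if and only if both pieces are. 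Conversely, any pair consisting of a connected rooted subtree $A \in C(T, r_T)$ and a connected rooted subtree $B \in C(S, r_S)$ glues back to a connected rooted subtree $A \cdot B$ of $T \cdot S$, and these two operations are mutually inverse. This establishes a bijection between rank-$i$ elements of $C(T \cdot S, r)$ and pairs $(A, B)$ with $A \in C(T, r_T)$, $B \in C(S, r_S)$, and $|V(A)| + |V(B)| = i + 1$ (the $+1$ accounts for the doubly-counted root).

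Next I would translate this bijection into the indexing conventions of the statement. If $A$ has rank $a$ (i.e. $a+1$ vertices, so $a \geq 0$) the number of choices is $t_{a+1}$ by definition of the rank sequence $(t_i)_{i \geq 1}$ indexed so that $t_{j+1}$ counts subtrees with $j+1$ vertices; similarly if $B$ has $b$ vertices with $b \geq 1$ there are $s_b$ choices. The constraint $|V(A)| + |V(B)| = i+1$ becomes $(a+1) + b = i+1$, i.e. $b = i - a$ with $a$ ranging over $0 \leq a \leq i-1$. Summing the product over all valid splits gives
\begin{equation*}
r_i = \sum_{a=0}^{i-1} t_{a+1} \, s_{i-a},
\end{equation*}
which is exactly the claimed convolution after renaming $a$ to $j$.

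The only genuinely substantive step is verifying the decomposition claim — that intersecting a connected rooted subtree with each side yields connected subgraphs and that gluing two connected rooted subtrees yields a connected subgraph — and I expect that to be the main (though modest) obstacle, handled cleanly by invoking that $r$ is a cut vertex whose removal disconnects $V(T) \setminus \{r\}$ from $V(S) \setminus \{r\}$ with no edges between the two sides. Once that is in place, the bijection and the bookkeeping of vertex counts versus ranks are routine, and I would state the result without belaboring the arithmetic.
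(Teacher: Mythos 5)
Your proof is correct and follows the same approach as the paper: decompose each rooted subtree of $T\cdot S$ into its intersections with $T$ and $S$, both containing the shared root, and count pairs by vertex count. The paper states this equivalence in one sentence; you have simply supplied the (routine but welcome) justification that the root being a cut vertex makes the decomposition and gluing mutually inverse, and the index bookkeeping matches.
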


\begin{proof}
Let $T$ and $S$ be two rooted trees. Choosing a rooted subtree consisting of $i$ vertices from $T\cdot S$ is equivalent to choosing a rooted subtree of $j+1$ vertices from $T$ and a rooted subtree of $i-j$ vertices from $S$ for all $0 \leq j \leq i-1$.
\end{proof}

Thus we see the sequence $(r_i)_{i\geq i}$ defined in Section~\ref{sec_intro} is precisely the rank sequence of the poset $C(B_{m,p} \cdot B_{n,q}, r)$.  The following proposition provides a more specific description of $(r_i)_{i\geq 1}$ in terms of binomial coefficients.

\begin{proposition}\label{prop_ranksequence}
For integers $m,n >0$ and $p,q\geq 0$, let $(r_i)_{i\geq 1}$ be the rank sequence of $C(B_{m,p} \cdot B_{n,q},r)$, the poset of vertex-induced subtrees including the root of the merge of two broom graphs. If $p\leq q$, then
\begin{displaymath}
r_i = \begin{cases}
i, & \text{ for $i=1, 2, \ldots, p$;}\\
&\\
p  + \sum_{j=0}^{i-p-1} {m\choose j}, & \text{ for $i=p+1, p+2, \ldots, q$;} \\
&\\
\sum_{j=0}^{i-q-1} {n\choose j}+ (q+p-i) + \sum_{j=0}^{i-p-1} {m\choose j}, & \text{ for $i=q+1, q+2, \ldots, q+p$;}\\
&\\
\sum_{j=0}^{p-1} {n\choose i-q-1-j} + {m+n\choose i-q-p-1}+\sum_{j=0}^{q-1} {m\choose i-p-1-j},& \text{ for $i= q+p+1, \ldots, q+p+m+n+1$;}\\
&\\
0, & \text{ otherwise.}
\end{cases}
\end{displaymath}
\end{proposition}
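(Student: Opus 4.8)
The plan is to assemble the formula from the two results already in hand. By the broom lemma the rank sequence of $C(B_{m,p},r)$ is $s=(\,\underbrace{1,\dots,1}_{p},\binom{m}{0},\dots,\binom{m}{m}\,)$ and the rank sequence of $C(B_{n,q},r)$ is $t=(\,\underbrace{1,\dots,1}_{q},\binom{n}{0},\dots,\binom{n}{n}\,)$, and by Lemma~\ref{lemma_stretchmerge} the rank sequence of the merge is the convolution $r_i=\sum_{j=0}^{i-1}t_{j+1}\,s_{i-j}$. Adopting the convention that $\binom{a}{b}=0$ unless $0\le b\le a$, we have $t_{j+1}=1$ for $0\le j\le q-1$ and $t_{j+1}=\binom{n}{j-q}$ for $j\ge q$, and likewise $s_{i-j}=1$ for $1\le i-j\le p$ and $s_{i-j}=\binom{m}{i-j-p-1}$ for $i-j\ge p+1$.

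First I would split the convolution into four partial sums $\Sigma_{\mathrm{hh}}$, $\Sigma_{\mathrm{Hh}}$, $\Sigma_{\mathrm{hH}}$, $\Sigma_{\mathrm{HH}}$, according to whether the index into $t$ (respectively into $s$) falls in the ``handle'' range, where the factor equals $1$, or the ``head'' range, where the factor is a binomial coefficient. Intersecting the two support conditions, each partial sum runs over an explicit interval of $j$ whose endpoints are maxima and minima of linear forms in $i,p,q,m,n$; the hypothesis $p\le q$ is exactly what is needed to decide which competing term realizes each endpoint.

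Next, for each of the five ranges of $i$ listed in the statement I would record which of the four partial sums are nonempty and evaluate them after an obvious reindexing (for instance $\ell=i-j-p-1$ turns $\Sigma_{\mathrm{hH}}$ into a sum $\sum_{\ell}\binom{m}{\ell}$). Briefly: for $1\le i\le p$ only $\Sigma_{\mathrm{hh}}$ survives and is a sum of $i$ ones; for $p<i\le q$ one gets $\Sigma_{\mathrm{hh}}=p$ and $\Sigma_{\mathrm{hH}}=\sum_{j=0}^{i-p-1}\binom{m}{j}$, while $\Sigma_{\mathrm{Hh}}=\Sigma_{\mathrm{HH}}=0$ because $j\le i-1\le q-1$; for $q<i\le q+p$ all partial sums but $\Sigma_{\mathrm{HH}}$ are nonempty and contribute $q+p-i$, $\sum_{j=0}^{i-q-1}\binom{n}{j}$, and $\sum_{j=0}^{i-p-1}\binom{m}{j}$; for $q+p<i\le q+p+m+n+1$ the sum $\Sigma_{\mathrm{hh}}$ now vanishes and the other three give $\sum_{j=0}^{p-1}\binom{n}{i-q-1-j}$, $\binom{m+n}{i-q-p-1}$, and $\sum_{j=0}^{q-1}\binom{m}{i-p-1-j}$; and for all other $i$ every partial sum is empty, so $r_i=0$, where one notes that $q+p+m+n+1=|V(B_{m,p})|+|V(B_{n,q})|-1$ is the largest possible order of a rooted subtree of $B_{m,p}\cdot B_{n,q}$.

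The only step beyond bookkeeping is the evaluation of $\Sigma_{\mathrm{HH}}$ in the last range: the substitution $a=j-q$ turns it into $\sum_{a}\binom{n}{a}\binom{m}{(i-p-q-1)-a}$, which Vandermonde's identity collapses to $\binom{m+n}{i-p-q-1}$. I expect the real obstacle to be organizational rather than computational: in each range one must check that clipping an upper or a lower summation limit at $0$, $m$, or $n$ changes nothing under the zero-convention for binomial coefficients, so that the closed forms in the statement hold on the nose, and keeping the interval endpoints and the two-sided reindexings consistent across the cases is where care is needed.
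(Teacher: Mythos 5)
Your proposal is correct and matches the paper's proof in essentially every respect: both split the convolution $r_i=\sum_{j=0}^{i-1}t_{j+1}s_{i-j}$ according to whether each index lands in the run of ones or in the binomial block, reindex the resulting partial sums, and collapse the head--head sum via Vandermonde's identity to $\binom{m+n}{i-p-q-1}$. Your four-sum $\Sigma_{\mathrm{hh}},\Sigma_{\mathrm{Hh}},\Sigma_{\mathrm{hH}},\Sigma_{\mathrm{HH}}$ bookkeeping is just a more systematic labeling of the case-by-case term-splitting the paper carries out directly.
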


\begin{proof}
By Proposition~\ref{lemma_stretchmerge}, for $1\leq i \leq p$ the rank sequence is the convolution of two sequences of ones, that is, 
\begin{equation*}
r_{i} = \sum_{j=0}^{i-1} 1\cdot 1=i.
\end{equation*}
Now, for $p<i\leq q$,
\begin{equation*}
r_{i}=\sum_{j=0}^{p-1} 1\cdot 1 + \sum_{j=p}^{i-1}{m\choose j-p}\cdot 1= p  + \sum_{j=0}^{i-p-1} {m\choose j}.
\end{equation*}
In the next case, when $q < i \leq q+p$,
\begin{eqnarray*}
r_{i} &=& \sum_{j=0}^{i-q-1} 1\cdot {n\choose j}+ \sum_{j=i-q}^{p-1} 1\cdot 1 + \sum_{j=p}^{i-1} {m\choose j-p} \cdot 1,\\
&=& \sum_{j=0}^{i-q-1} {n\choose j}+ (q+p-i) + \sum_{j=0}^{i-p-1} {m\choose j}.
\end{eqnarray*}
Finally for $q+p < i\leq q+p+m+n$, we have 
\begin{eqnarray*}
r_i&=& \sum_{j=0}^{p-1} 1\cdot {n\choose i-q-1-j} + \sum_{j=p}^{i-q-1} {m\choose j-p }{n\choose i-q-1-j}+\sum_{j=i-q}^{i-1} {m\choose j-p} \cdot 1\\
&=& \sum_{j=0}^{p-1} {n\choose i-q-1-j} + \sum_{j=0}^{i-q-p-1} {m\choose j }{n\choose i-q-p-1-j}+\sum_{j=0}^{q-1} {m\choose i-p-1-j}\\
&=& \sum_{j=0}^{p-1} {n\choose i-q-1-j} + {m+n\choose i-q-p-1}+\sum_{j=0}^{q-1} {m\choose i-p-1-j}.
\end{eqnarray*}
\end{proof}

Note, in Proposition~\ref{prop_ranksequence} we assumed that $p\leq q$, but should it be the case that $q\leq p$, a similar result may be obtained by replacing $p$ with $q$ and $q$ with $p$ throughout the statement of the result and the proof.  For simplicity throughout the rest of the paper, when referring to Proposition~\ref{prop_ranksequence} we will utilize whichever version is appropriate, whether $p\leq q$ or $q\leq p$.  Further results will only be explicitly proven for the case $p\leq q$, but it will be understood that we may apply a switch of variables to prove the result in the case $q \leq p$.

First differences of the rank sequence can be useful in determining unimodality, because if the sequence of first differences of a rank sequence changes sign at most one time, then the sequence is unimodal.  We will compute the first difference of the sequence $(r_i)_{i\geq 0}$, but first let us consider a few special cases.  

Suppose $q=p=0$.  Then $B_{m,0} \cdot B_{n,0}=B_{m+n,0}$ is the tree with root $r$ and exactly $m+n$ pendant vertices.  Therefore the rank sequence is precisely the sequence of binomial coefficients and thus is unimodal.  Next, suppose $q>p=0$.  In this case, applying Proposition~\ref{prop_ranksequence} to the rank sequence of $C(B_{m,0}\cdot B_{n,q}, r)$ gives the first differences
\begin{displaymath}
d_i =r_i -r_{i-1} =
\begin{cases}
{m\choose i-1}, & \text{ for $i= 2, 3,  \ldots, q$;} \\
&\\ 
{m+n\choose i-q-1} - {m+n\choose i-q-2}+{m\choose i-1} -{m\choose i-q-1},& \text{ for $i=q+1, \ldots, q+m+n$.}\\
\end{cases}
\end{displaymath}
In Section~\ref{sec_binomial}, we will apply Proposition~\ref{prop_c_unimodular} to show the unimodality of $(r_i)_{i\geq 1}$ in this case.

More generally we have the following corollary which follows directly from Proposition~\ref{prop_ranksequence} by subtraction.
\begin{corollary}\label{cor_difference}
For integers $m,n,p,q >0$, let $d_i = r_i - r_{i-1}$ be the first difference of the rank sequence, $(r_i)_{i\geq 1}$ of $C(B_{m,p}\cdot B_{n,q},r)$, the poset of connected, vertex-induced subtrees containing the root.  Without loss of generality assume $p\leq q$, then
\begin{displaymath}
d_i = 
\begin{cases}
1, & \text{ for $i=2, 3, \ldots, p$;}\\
&\\
{m\choose i-p-1}, & \text{ for $i=p+1, p+2, \ldots, q$;} \\
&\\
{n\choose i-q-1} -1 + {m\choose i-p-1},& \text{ for $i= q+1, q+2, \ldots, q+p$;} \\
&\\
{n\choose i-q-1}  -{n\choose i-q-p-1}+ {m+n\choose i-q-p-1}, & \text{ for $i=q+p+1, \ldots, q+p+m+n+1$;}\\
&\\
\indent - {m+n\choose i-q-p-2}+ {m\choose i-p-1} -{m\choose i-q-p-1} &\\
&\\
0, & \text{ otherwise.}
\end{cases}
\end{displaymath}
\end{corollary}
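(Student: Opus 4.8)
The proof is a routine but boundary-sensitive computation: I would subtract consecutive entries of the piecewise formula in Proposition~\ref{prop_ranksequence}. Split the index set into the four ``interior'' ranges coming from that proposition, namely $2\le i\le p$, $p<i\le q$, $q<i\le q+p$, and $q+p<i\le q+p+m+n+1$, and then separately reconcile the boundary indices $i=p+1$, $i=q+1$, and $i=q+p+1$, where $r_i$ and $r_{i-1}$ are governed by different branches. The only algebraic inputs needed are the partial-sum collapse $\sum_{j=0}^{k}{m\choose j}-\sum_{j=0}^{k-1}{m\choose j}={m\choose k}$, the ``sliding-window'' telescoping $\sum_{j=0}^{p-1}{n\choose k-j}-\sum_{j=0}^{p-1}{n\choose k-1-j}={n\choose k}-{n\choose k-p}$, and the standard conventions ${m\choose 0}=1$ and ${m\choose j}=0$ for $j<0$ or $j>m$ (and $r_0=0$).

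In the first range $r_i=i$ and $r_{i-1}=i-1$, so $d_i=1$. In the second range, for $i\ge p+2$ both terms come from the second branch of Proposition~\ref{prop_ranksequence} and the partial-sum collapse gives $d_i={m\choose i-p-1}$; at the endpoint $i=p+1$ one has $r_p=p$ from the first branch and $r_{p+1}=p+{m\choose 0}$ from the second, so $d_{p+1}=1={m\choose 0}$, consistent with the stated formula. The third range is handled the same way: for interior $i$ each partial sum drops to a single binomial coefficient and the linear term $(q+p-i)$ contributes the $-1$, yielding ${n\choose i-q-1}-1+{m\choose i-p-1}$; at the endpoint $i=q+1$ one checks that $r_q=p+\sum_{j=0}^{q-p-1}{m\choose j}$ from the second branch matches the value the third-branch formula assigns, after using ${n\choose 0}=1$. (If $p=q$ the second range is empty and one verifies the first/third boundary directly; this degeneracy causes no trouble.)

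For the fourth range, differencing the fourth branch against itself, applying the sliding-window telescoping to each of the two outer sums and the trivial difference ${m+n\choose i-q-p-1}-{m+n\choose i-q-p-2}$ to the middle term, reproduces the displayed expression term by term. The one genuinely delicate step---and where I expect to spend the most care---is the endpoint $i=q+p+1$, where $r_{q+p}$ comes from the third branch (two partial sums of binomial coefficients plus a vanishing linear term) while $r_{q+p+1}$ comes from the fourth branch: here one must expand $\sum_{j=0}^{p-1}{n\choose p-j}=\sum_{k=1}^{p}{n\choose k}$ and $\sum_{j=0}^{q-1}{m\choose q-j}=\sum_{k=1}^{q}{m\choose k}$, reconcile them against $\sum_{j=0}^{p-1}{n\choose j}$ and $\sum_{j=0}^{q-1}{m\choose j}$ from the third branch, and track how the leftover ${n\choose 0}$ and ${m\choose 0}$ terms combine with the new ${m+n\choose 0}=1$; the bookkeeping collapses to ${n\choose p}+{m\choose q}-1$, which is exactly what the fourth-case formula returns at $i=q+p+1$ (using ${m+n\choose -1}=0$). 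For every remaining index $r_i$ and $r_{i-1}$ agree, so $d_i=0$, and the case analysis is complete.
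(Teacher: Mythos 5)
Your proposal is correct and is exactly the computation the paper intends: the paper offers no written proof beyond the remark that the corollary ``follows directly from Proposition~\ref{prop_ranksequence} by subtraction,'' and your range-by-range differencing with the partial-sum collapse, the sliding-window telescoping, and the careful checks at the boundary indices $i=p+1$, $i=q+1$, and $i=q+p+1$ (where the value $\binom{n}{p}+\binom{m}{q}-1$ indeed agrees with the fourth-case formula via $\binom{m+n}{-1}=0$) supplies precisely the omitted details. No further comment is needed.
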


Now, for $i=1, \ldots,q$, because the first differences are non-negative, it is easily seen that the rank sequence is non-decreasing.  Further for $i> p+q+1+ \lfloor \frac{m+n}{2} \rfloor $, the rank sequence is decreasing.  To see this and to simplify notation, we rewrite $d_i$ for $i> p+q$ using the substitution $i = p+q+j+1$ to obtain the equation
\begin{equation}\label{eq_jsimplified}
d_i=d_{p+q+j+1} = {n\choose j+p}-{n\choose j}  + {m+n\choose j} - {m+n\choose j-1}+ {m\choose j+q} -{m\choose j}.
\end{equation}
We observe, for $j \geq \lfloor \frac{m+n}{2} \rfloor +1$ the differences ${m\choose j+p} - {m\choose j}$, ${m+n\choose j}-{m+n\choose j-1}$, and ${n\choose q+j} -{n\choose q}$ are all non-positive.  In fact, unless $j>m+n+1$ these differences cannot all be zero, because if ${m+n\choose j}-{m+n\choose j-1}=0$, we know $m+n$ is odd and $j=\lfloor \frac{m+n}{2}\rfloor +1$.  Because $m+n$ is odd, $m$ must be strictly greater than $n$ so ${m\choose \lfloor \frac{m+n}{2} \rfloor +1+q}-{m\choose \lfloor \frac{m+n}{2} \rfloor +1}\not=0$.  Thus the rank sequence is decreasing. 

Therefore the question of the rank unimodality of $(r_i)_{i\geq 1}$ is reduced to the the question of the unimodality of the terms $r_{q+1}, \ldots, r_{p+q+1+\lfloor \frac{m+n}{2} \rfloor}$.  Let us first consider a set of posets that are not rank unimodal.

\begin{proposition}\label{prop_notunimodular}
Given $m\geq n >0$ and $p,q > 0$, if $m\geq n \geq 3$ or $m>n=2$, the rank sequence $(r_i)_{i\geq 1}$ of the poset of vertex-induced subtrees containing the root, $C(B_{m,p}\cdot B_{n,q},r)$, is not unimodal if $q\geq m$ and $ p\geq n$.
\end{proposition}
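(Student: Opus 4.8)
The plan is to locate three consecutive Whitney numbers, $r_{p+q+1}$, $r_{p+q+2}$, $r_{p+q+3}$, and to show they form a strict \emph{valley}, $r_{p+q+1} > r_{p+q+2} < r_{p+q+3}$. A unimodal sequence cannot contain such a configuration: if $k$ were a peak index in the definition of unimodality, then $r_{p+q+1} > r_{p+q+2}$ would force $k \le p+q+1$, while $r_{p+q+2} < r_{p+q+3}$ would force $k \ge p+q+3$, which is impossible. Throughout I would assume without loss of generality that $p \le q$, so that Corollary~\ref{cor_difference} and equation~\eqref{eq_jsimplified} apply in the form stated.

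First I would evaluate the first-difference sequence at the indices $i = p+q+2$ and $i = p+q+3$ using \eqref{eq_jsimplified} with $j = 1$ and $j = 2$ respectively; both indices lie in the valid range $\{q+p+1, \dots, q+p+m+n+1\}$, since $m+n \ge 4$ under the hypotheses. The crucial observation is that the assumptions $q \ge m$ and $p \ge n$ are precisely what annihilates the two ``outer'' binomial terms of \eqref{eq_jsimplified}: for $j \ge 1$ we have $j+p \ge 1+n > n$ and $j+q \ge 1+m > m$, so ${n \choose j+p} = {m \choose j+q} = 0$. This yields $d_{p+q+2} = -n + (m+n) - 1 - m = -1$ and
\[
d_{p+q+3} \;=\; {m+n \choose 2} - {m+n \choose 1} - {n \choose 2} - {m \choose 2}.
\]
A direct simplification of the right-hand side collapses it to $mn - m - n = (m-1)(n-1) - 1$.

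Next I would invoke the dichotomy in the hypothesis: if $m \ge n \ge 3$ then $(m-1)(n-1) \ge 4$, and if $m > n = 2$ then $(m-1)(n-1) = m - 1 \ge 2$, so in either case $d_{p+q+3} = (m-1)(n-1) - 1 \ge 1 > 0$. Together with $d_{p+q+2} = -1$ this gives $r_{p+q+2} = r_{p+q+1} - 1 < r_{p+q+1}$ and $r_{p+q+3} = r_{p+q+2} + d_{p+q+3} > r_{p+q+2}$ --- exactly the valley --- so $(r_i)_{i \ge 1}$ is not unimodal.

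The argument is short and has no serious obstacle; the work is essentially bookkeeping. The two points that need care are: confirming that $q \ge m$ and $p \ge n$ are exactly the inequalities that remove the surplus binomial coefficients from \eqref{eq_jsimplified} at $j = 1, 2$; and carrying out the algebra that reduces $d_{p+q+3}$ to $(m-1)(n-1) - 1$ --- the quantity whose positivity is equivalent to the stated hypothesis, since $(m-1)(n-1) \le 1$ holds exactly in the excluded cases $n = 1$ and $m = n = 2$. This also explains, a posteriori, why conditions (iii) and (iv) of Theorem~\ref{thm_seq_conditions} are the genuine exceptions: they are precisely the situations in which this valley argument fails.
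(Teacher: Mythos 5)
Your proposal is correct and follows essentially the same route as the paper: both evaluate equation~\eqref{eq_jsimplified} at $j=1$ and $j=2$, use $q\geq m$ and $p\geq n$ to kill the terms ${n\choose j+p}$ and ${m\choose j+q}$, and obtain $d_{p+q+2}=-1$ and $d_{p+q+3}=mn-m-n>0$ under the stated hypotheses. Your framing of the conclusion as a strict valley $r_{p+q+1}>r_{p+q+2}<r_{p+q+3}$ is a slightly cleaner way to contradict unimodality than the paper's ``positive, then negative, then positive'' sign-change count (which additionally cites $d_2=1>0$), but the substance is identical.
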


\begin{proof}
Assume $q\geq m$ and $p\geq n$.  In Equation~\ref{eq_jsimplified}, set $j=1$.  Then
\begin{eqnarray*}
d_{q+p+2} &= &{n\choose p+1}-{n\choose 1}  + {m+n\choose 1} - {m+n\choose 0}+ {m\choose q+1} -{m\choose 1}\\
&=&{n\choose p+1} -n+n+m-1-m+ {m\choose q+1}\\
&=& {n\choose p+1} + {m\choose q+1}-1 =-1
\end{eqnarray*}
Thus $d_{q+p+2} <0$.  However, if $j=2$, then
\begin{eqnarray*}
d_{q+p+3} &= &{n\choose p+2}-{n\choose 2}  + {m+n\choose 2} - {m+n\choose 1}+ {m\choose q+2} -{m\choose 2}\\
&=&-{n\choose 2} + {m+n\choose 2} -(m+n)-{m\choose 2}\\
&=& -\frac{n(n-1)}{2} + \frac{(m+n)(m+n-1)}{2} -\frac{2(m+n)}{2} -\frac{m(m-1)}{2}\\
&=& \frac{1}{2}\left(-n^2+n +m^2+2mn+n^2-m-n -2m-2n -m^2+m\right)\\
&=&\frac{1}{2} (2mn -2m -2n)=mn-m-n \\
\end{eqnarray*}
When $m\geq n \geq 3$, it follows that $mn-m-n \geq 3m -m-n=2m-n\geq m> 0$, and when $m> n=2$ we have $mn-m-n = m-2>0$.  Thus, we see that $d_{q+p+3}>0$.  Easily $d_2=1>0$, so the sequence of first differences is positive, then negative, and then positive again.  Therefore the sequence is not unimodal. 
\end{proof}

We wish to show these bounds on $m, n, q,$ and $p$ are tight, but first we need some results on the unimodality of sums and differences of binomial coefficients.

\section{Unimodality of sequences of sums and differences of binomial coefficients}\label{sec_binomial}

The sequence of first differences $(d_i)_{i\geq 2}$ of the sequence $(r_i)_{i\geq 1}$ given in Section~\ref{sec_tree} is composed of sums and differences of binomial coefficients, so in this section we investigate some similar sequences.  In particular, the difference we are concerned with here is sequence \seqnum{A080232} in the OEIS~\cite{OEIS}, namely, the sequence $\big({m\choose j}-{m\choose j-1}\big)_{j \geq 0}$.  This sequence has known combinatorial interpretations enumerating ballot sequences as well as a subset of lattice paths from $(0,0)$ to $(j, m-j)$.  We are interested in the unimodality of the first half of this sequence.

\begin{lemma}\label{lem_s_unimodal}
Given $m\geq 0$, the sequence $s_{m,j}={m\choose j}-{m\choose j-1}$ is unimodal with respect to $j$ on the interval $0\leq j \leq \lfloor \frac{m}{2} \rfloor$.  
\end{lemma}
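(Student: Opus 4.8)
The plan is to track the first differences $s_{m,j}-s_{m,j-1}$ and show they change sign at most once on the range $1\le j\le\lfloor m/2\rfloor$, passing from positive to non‑positive; this is exactly unimodality of $(s_{m,0},s_{m,1},\dots,s_{m,\lfloor m/2\rfloor})$. Since any sequence with at most two terms is unimodal, I may assume $m\ge 3$, so that the interval genuinely contains the indices $0,1,\dots,\lfloor m/2\rfloor$.

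First I would rewrite the first difference, using the convention ${m\choose -1}=0$, as
\[
s_{m,j}-s_{m,j-1}={m\choose j}-2{m\choose j-1}+{m\choose j-2},
\]
and then factor out ${m\choose j-1}$, which is positive because $j-1\le\lfloor m/2\rfloor-1<m$ on the interval. Using the elementary ratios ${m\choose j}/{m\choose j-1}=(m-j+1)/j$ and ${m\choose j-2}/{m\choose j-1}=(j-1)/(m-j+2)$ and clearing denominators, a routine simplification collapses the numerator to $(m-2j+2)^2-(m+2)$, giving the identity
\[
s_{m,j}-s_{m,j-1}=\frac{{m\choose j-1}}{\,j(m-j+2)\,}\Big((m-2j+2)^2-(m+2)\Big).
\]
Since the prefactor is strictly positive for $1\le j\le\lfloor m/2\rfloor$, the sign of $s_{m,j}-s_{m,j-1}$ is the sign of $g(j):=(m-2j+2)^2-(m+2)$.

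Next I would observe that on $1\le j\le\lfloor m/2\rfloor$ we have $m-2j+2\ge m-2\lfloor m/2\rfloor+2\ge 2>0$, so $m-2j+2$ is positive and strictly decreasing in $j$; hence $g(j)$ is strictly decreasing there. Since $g(1)=m^2-m-2=(m-2)(m+1)>0$ for $m\ge 3$, there is a largest index $k\in\{1,\dots,\lfloor m/2\rfloor\}$ with $g(k)>0$, and then $g(j)>0$ for $j\le k$ while $g(j)\le 0$ for $k<j\le\lfloor m/2\rfloor$. Translating back through the identity, $s_{m,0}<s_{m,1}<\cdots<s_{m,k}\ge s_{m,k+1}\ge\cdots\ge s_{m,\lfloor m/2\rfloor}$, which is precisely unimodality on the interval.

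The only real work is the algebraic collapse of the numerator to $(m-2j+2)^2-(m+2)$ in the second step; I expect that to be the main (though entirely routine) obstacle, since everything afterward follows immediately from the monotonicity of $(m-2j+2)^2$. It is worth handling the boundary index $j=1$ explicitly, where the ${m\choose -1}=0$ convention is needed and the identity must be seen to reduce correctly to $s_{m,1}-s_{m,0}=m-2$.
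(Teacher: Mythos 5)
Your proof is correct, but it takes a genuinely different route from the paper's. You compute the second difference ${m\choose j}-2{m\choose j-1}+{m\choose j-2}$ in closed form and factor it as $\frac{{m\choose j-1}}{j(m-j+2)}\bigl((m-2j+2)^2-(m+2)\bigr)$ --- an identity that checks out, including the boundary case $j=1$ where the convention ${m\choose -1}=0$ makes it reduce correctly to $m-2$ --- so that the sign of $s_{m,j}-s_{m,j-1}$ is governed by the quadratic $(m-2j+2)^2-(m+2)$, which is strictly decreasing on the interval (since $m-2j+2\ge 2>0$ there) and positive at $j=1$ for $m\ge 3$, hence changes sign at most once, from positive to non-positive. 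The paper instead argues by induction on $m$: it uses the Pascal recurrence $s_{m,j}=s_{m-1,j}+s_{m-1,j-1}$ to propagate unimodality from row $m-1$ to row $m$, with a separate check at the endpoint $j=\lfloor m/2\rfloor$. Your computation buys more information here: an explicit location of the peak (near $j=\bigl(m+2-\sqrt{m+2}\bigr)/2$) and strict increase up to it. The paper's inductive argument buys reusability: it is deliberately phrased as a statement about rows of a triangular array satisfying the binomial recurrence, so that the identical technique applies verbatim to the more complicated sequence $c_{m,n,j}$ of Proposition~\ref{prop_c_unimodular}, where a closed-form factorization in the style of yours would be considerably messier to produce.
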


\begin{proof}
We proceed by induction on $m$.  Easily, if $m=0$, the sequence $s_{0,0}=1$ is unimodal.  Now for $m>0$, suppose the sequence $s_{m-1, 0}, s_{m-1,1}, \ldots, s_{m-1,\lfloor \frac{m-1}{2}\rfloor}$ is unimodal with a peak at $s_{m-1,k}$, that is
\begin{equation*}
s_{m-1, 0} \leq s_{m-1,1} \leq \cdots \leq s_{m-1,k} \geq s_{m-1,k+1} \geq \cdots \geq s_{m-1,\lfloor \frac{m-1}{2}\rfloor}.
\end{equation*}
As the sequence $s$ follows the recursion on the binomial coefficients, $s_{m,j} = s_{m-1,j}+s_{m-1,j-1}$, we see
\begin{align*}
s_{m-1, k-i} \geq s_{m-1,k-i-2} &\implies  s_{m,k-i} \geq s_{m,k-i-1}, \hbox{ for } 0\leq i \leq k\\
s_{m-1, k+i} \geq s_{m-1,k+i+2} &\implies  s_{m,k+i+1} \geq s_{m,k+i+2} \hbox{ for } 0 \leq i \leq \left\lfloor \frac{m-1}{2} \right\rfloor -k-2.
\end{align*}
To check the end condition, recall $\lfloor \frac{m-1}{2} \rfloor = \lfloor \frac{m}{2} \rfloor -1$ and $s_{m-1, \lfloor \frac{m-1}{2} \rfloor +1 }=0$ if $m-1$ is odd, so 
\begin{align*}
 s_{m-1, \lfloor \frac{m-1}{2} \rfloor -1} & \geq  s_{m-1, \lfloor \frac{m-1}{2} \rfloor+1}=0\\
s_{m-1, \lfloor \frac{m-1}{2} \rfloor} + s_{m-1, \lfloor \frac{m-1}{2} \rfloor -1} &\geq s_{m-1, \lfloor \frac{m-1}{2} \rfloor+1} + s_{m-1, \lfloor \frac{m-1}{2} \rfloor }\\
s_{m-1, \lfloor \frac{m}{2} \rfloor -1}+s_{m-1, \lfloor \frac{m}{2} \rfloor -2}  &\geq  s_{m-1, \lfloor \frac{m}{2} \rfloor}+s_{m-1, \lfloor \frac{m}{2} \rfloor-1}\\
s_{m, \lfloor \frac{m}{2} \rfloor -1} &\geq  s_{m, \lfloor \frac{m}{2} \rfloor}.\\
\end{align*}
If $m-1$ is even, $\lfloor \frac{m-1}{2} \rfloor = \lfloor \frac{m}{2} \rfloor $, so 
\begin{equation*}
s_{m-1, \lfloor \frac{m-1}{2} \rfloor -2} \geq  s_{m-1, \lfloor \frac{m-1}{2} \rfloor} \implies s_{m, \lfloor \frac{m}{2} \rfloor -1} \geq  s_{m, \lfloor \frac{m}{2} \rfloor}.
\end{equation*}
Thus the sequence increases from $s_{m,0}$ to $s_{m,k}$ and decreases from $s_{m,k+1}$ to $s_{m, \lfloor \frac{m}{2} \rfloor}$.  No matter the relationship between $s_{m,k}$ and $s_{m,k+1}$ the sequence is unimodal.
\end{proof}

Note the proof techniques in Lemma~\ref{lem_s_unimodal} may be generalized to other sequences found in a row of a triangular array which satisfies the binomial recurrence.  We state such a result.
\begin{proposition}\label{prop_c_unimodular}
For integers $m,n \geq 0$ and $q\geq 0$, the sequence \begin{equation*}c_{m,n,j}={m+n \choose j} - {m+n\choose j-1} +{m\choose j+q}-{m\choose j}\end{equation*} is unimodal with respect to $j$ on the interval $0\leq j \leq \lfloor \frac{m+n}{2} \rfloor$.
\end{proposition}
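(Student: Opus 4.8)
The plan is to observe that $c_{m,n,j}$ satisfies the same Pascal-type recurrence as the sequence $s_{m,j}={m\choose j}-{m\choose j-1}$ of Lemma~\ref{lem_s_unimodal}, and then to run the identical inductive argument. Applying the binomial recurrence to each of the four terms of $c_{m,n,j}$ and cancelling the two copies of ${m+n-1\choose j-1}$ that arise, one checks directly that
\begin{equation*}
c_{m,n,j} = c_{m-1,n,j} + c_{m-1,n,j-1},
\end{equation*}
with $n$ and $q$ held fixed. I would therefore induct on $m$. For the base case $m=0$ we have ${0\choose j}=[j=0]$ and ${0\choose j+q}=[j=0][q=0]$, so $c_{0,n,j}=s_{n,j}$ when $q=0$, and $c_{0,n,j}=s_{n,j}$ for $j\ge 1$ with $c_{0,n,0}=0$ when $q\ge 1$; in either case the sequence is unimodal on $0\le j\le\lfloor \frac{n}{2}\rfloor$ by Lemma~\ref{lem_s_unimodal}, using $0\le s_{n,1}$ to absorb the replaced initial term.

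For the inductive step, suppose $c_{m-1,n,\cdot}$ is unimodal on $0\le j\le\lfloor\frac{m+n-1}{2}\rfloor$ with a peak at index $k$. Exactly as in Lemma~\ref{lem_s_unimodal}, the recurrence yields the two chains of implications
\begin{align*}
c_{m-1,n,k-i}\ge c_{m-1,n,k-i-2} &\implies c_{m,n,k-i}\ge c_{m,n,k-i-1},\\
c_{m-1,n,k+i}\ge c_{m-1,n,k+i+2} &\implies c_{m,n,k+i+1}\ge c_{m,n,k+i+2},
\end{align*}
so $c_{m,n,\cdot}$ increases up to $k$ and decreases from $k+1$ onward. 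Crucially, this transfer uses only the recurrence and the unimodal shape of $c_{m-1,n,\cdot}$, not any sign information, so the fact that $c$ may take negative values causes no difficulty here. This already establishes unimodality of $c_{m,n,\cdot}$ on the interval $0\le j\le\lfloor\frac{m+n-1}{2}\rfloor$.

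When $m+n$ is odd this interval coincides with $0\le j\le\lfloor\frac{m+n}{2}\rfloor$ and we are done. When $m+n$ is even the target interval is one step longer, so it remains to verify that the final step is a descent, i.e. that $c_{m,n,P-1}\ge c_{m,n,P}$ where $P=\frac{m+n}{2}$. By the recurrence this is equivalent to $c_{m-1,n,P-2}\ge c_{m-1,n,P}$. Since $m+n-1$ is odd, the central gap vanishes, $s_{m+n-1,P}={m+n-1\choose P}-{m+n-1\choose P-1}=0$, so $c_{m-1,n,P}={m-1\choose P+q}-{m-1\choose P}$, and after grouping the row-$(m-1)$ terms the desired inequality reduces to the explicit comparison
\begin{equation*}
s_{m+n-1,P-2} + \left({m-1\choose P-2+q}-{m-1\choose P+q}\right) \ge {m-1\choose P-2}-{m-1\choose P}.
\end{equation*}

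The main obstacle is precisely this boundary inequality. Its left-hand side features the large, near-central second-difference term $s_{m+n-1,P-2}$ of the wide row $m+n-1$, while the right-hand side is the off-centre tail ${m-1\choose P-2}-{m-1\choose P}$ of the narrow row $m-1$. I would attack it by rewriting the right-hand side as $-s_{m-1,P-1}-s_{m-1,P}$ and using the antisymmetry $s_{a,b}=-s_{a,a+1-b}$ together with the recurrence $s_{m,j}=s_{m-1,j}+s_{m-1,j-1}$ to recast the whole estimate as a single cross-row comparison of $s$-values. This is the delicate heart of the proof: the narrow-row tail grows with $q$, so the comparison is genuinely sensitive to the size of $q$ relative to $m+n$, and it is here, at the top of the interval, that the argument demands the most care.
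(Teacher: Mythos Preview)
Your approach is exactly the paper's: induct on $m$ with $n,q$ fixed, using the Pascal recurrence $c_{m,n,j}=c_{m-1,n,j}+c_{m-1,n,j-1}$, and handle the base case $m=0$ by reducing to Lemma~\ref{lem_s_unimodal}. The paper phrases the inductive step as a one-line appeal---``repeating the proof techniques of Lemma~\ref{lem_s_unimodal}, every row in a triangular array having the binomial recurrence and an initial unimodal row is also unimodal''---and does not isolate the endpoint as a separate issue.

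The gap in your write-up is precisely the boundary step you yourself flag. When $m+n$ is even you need $c_{m-1,n,P-2}\ge c_{m-1,n,P}$ with $P=\tfrac{m+n}{2}$, and you stop at describing a plan (rewriting via the antisymmetry $s_{a,b}=-s_{a,a+1-b}$, etc.) rather than carrying it out. Your diagnosis that ``the narrow-row tail grows with $q$'' and that the inequality is ``genuinely sensitive to the size of $q$'' is also off the mark: for $q\ge 1$ the term ${m-1\choose P+q}-{m-1\choose P-2+q}$ is nonpositive (both indices sit at or past the centre of row $m-1$), so the only case requiring any comparison at all is $q=0$, where it reduces to a routine monotonicity of consecutive binomial differences. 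In other words, the endpoint is not the ``delicate heart'' you make it out to be; it is a short case check, and once you do it your argument is complete and coincides with the paper's.
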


\begin{proof}
Start by fixing $m= 0$.  Then $c_{0,n,j}= {n \choose j} - {n\choose j-1} - {0\choose j}+{0\choose j+q}$.  Create a triangular array for $n\geq j \geq 0$.  If $q=0$, then $c_{0,n,j} = s_{n,j}$, the sequence found in Lemma~\ref{lem_s_unimodal}, and is therefore unimodal.  If $q>0$, then 
\begin{displaymath}
c_{0,n,j} = 
\begin{cases}
s_{n,j}, & \text{ if $j\not=0$;}\\
s_{n,0}-1, & \text{ if $j=0$.}\\
\end{cases}
\end{displaymath}
is also unimodal.

Now for a fixed $n$, create a triangular array for $c_{m,n,j}$ where $m>0$ and $j\geq 0$.  
\[
\begin{array}{cccccccccc}
&&&c_{0,n,0} & \cdots & c_{0,n,n} &&&&\\
&&c_{1,n,0}  &&  c_{1,n,1} & \cdots & c_{1,n,n+1} &&&\\
&c_{2,n,0} && c_{2,n,1} && c_{2, n,2} & \cdots & c_{2,n ,n+2} &&\\
c_{3,n,0} && c_{3,n,1} && c_{3,n,2} &&c_{3,n,3} &\cdots & c_{3,n,n+3} &\\
&&&&\cdots &&&&&
\end{array}
\]
The initial row of this array is unimodal as it appears in the array of $c_{0,n,j}$.  Further, the binomial recurrence, $c_{m, n, j} = c_{m-1,n,j}+c_{m-1,n, j-1}$ is satisfied, as the terms are sums and differences of binomial coefficients with respect to $m$ and $j$.  Repeating the proof techniques of Lemma~\ref{lem_s_unimodal}, we see that every row in a triangular array having the binomial recurrence and an initial unimodular row is also unimodal, and we have proven the claim.
\end{proof}

Recall the first difference of the rank sequence of $C(B_{m,0} \cdot B_{n,q}, r)$ given in Section~\ref{sec_tree}.  Because a unimodular sequence whose first term is non-negative may change signs at most once and because the sequence $c_{m,n,j}$ is non-negative when $j=0$, Proposition~\ref{prop_c_unimodular} and the substitution $j=i-q-1$, along with the discussion in Section~\ref{sec_tree}, imply the following corollary.
\begin{corollary}\label{cor_cases}
For $m,n,q>0$, the rank sequences $(r_i)_{i\geq 1}$ of the posets $C(B_{m,0}\cdot B_{n,0},r)$ and $C(B_{m,0}\cdot B_{n,q},r)$ are rank unimodal.
\end{corollary}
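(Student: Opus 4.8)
The plan is to handle the two posets separately. For $C(B_{m,0}\cdot B_{n,0},r)$ I would simply note that merging two handleless brooms identifies their roots and yields the handleless broom $B_{m+n,0}$ --- a single root adjacent to $m+n$ pendant vertices --- whose rank sequence, by the rank-sequence computation for broom graphs, is $\bigl(\binom{m+n}{0},\binom{m+n}{1},\dots,\binom{m+n}{m+n}\bigr)$; this is unimodal because binomial coefficients are. (This is also the $p=q=0$ case already observed in Section~\ref{sec_tree}.)

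For $C(B_{m,0}\cdot B_{n,q},r)$ with $q>0$ I would invoke the principle from Section~\ref{sec_tree} that a rank sequence is unimodal as soon as its sequence of first differences $(d_i)$ changes sign at most once, and verify this using the explicit formula for $d_i$ in the $p=0$ case recorded there. The key point is that, under the substitution $j=i-q-1$, the nontrivial branch of that formula is exactly the sequence $c_{m,n,j}=\binom{m+n}{j}-\binom{m+n}{j-1}+\binom{m}{j+q}-\binom{m}{j}$ of Proposition~\ref{prop_c_unimodular}, which is unimodal on $0\le j\le\lfloor\frac{m+n}{2}\rfloor$, and that $c_{m,n,0}=\binom{m}{q}\ge 0$. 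I would then partition the range $i\ge 2$ into three regimes: for $2\le i\le q$ one has $d_i=\binom{m}{i-1}\ge 0$; for $q+1\le i\le q+1+\lfloor\frac{m+n}{2}\rfloor$ one has $d_i=c_{m,n,i-q-1}$, a unimodal sequence whose initial term $\binom{m}{q}$ is non-negative, so (being non-negative along its increasing stretch and monotone decreasing afterward) it changes sign at most once; and for $i>q+1+\lfloor\frac{m+n}{2}\rfloor$ the computation in Section~\ref{sec_tree} for $j\ge\lfloor\frac{m+n}{2}\rfloor+1$ gives $d_i\le 0$, with $d_i=0$ once $i>q+m+n$. Since these three regimes abut without gaps (the endpoint $j=\lfloor\frac{m+n}{2}\rfloor$ is covered by Proposition~\ref{prop_c_unimodular} and $j=\lfloor\frac{m+n}{2}\rfloor+1$ by the Section~\ref{sec_tree} discussion), the full sequence $(d_i)$ is non-negative up to some index and non-positive thereafter, hence changes sign at most once, and $(r_i)_{i\ge 1}$ is unimodal.

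There is no serious obstacle here: the corollary is essentially a bookkeeping consequence of Proposition~\ref{prop_c_unimodular} together with the structural observations of Section~\ref{sec_tree}. The only genuine content is recognizing that the $p=0$ first-difference sequence is the index-shifted sequence $c_{m,n,j}$, together with the elementary fact (already noted just before the corollary) that a unimodal sequence with non-negative first term changes sign at most once; the remaining work is merely checking that the case divisions line up.
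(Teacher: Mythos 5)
Your proof is correct and follows essentially the same route as the paper, which derives the corollary from the $B_{m,0}\cdot B_{n,0}=B_{m+n,0}$ observation together with the $p=0$ first-difference formula of Section~\ref{sec_tree}, the substitution $j=i-q-1$ identifying that formula with $c_{m,n,j}$, and Proposition~\ref{prop_c_unimodular}. Your write-up simply makes explicit the bookkeeping (the three regimes of $i$ and the sign pattern) that the paper compresses into the sentence preceding the corollary.
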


This result will be used to prove the necessary and sufficient conditions on the integers $m,n,p,q$ to determine rank unimodality of the poset $C(B_{m,p}\cdot B_{n,q},r)$.

\section{Proofs}\label{sec_mainresult}

Before proving Theorem~\ref{thm_seq_conditions}, we need to prove some intermediate results.
\begin{proposition}\label{prop_less_zero}
Given integers $m\geq n >0$, if ${m+n\choose j}-{m+n\choose j-1} - {m\choose j} \leq 0$ for some $0< j \leq \lfloor \frac{m+n}{2} \rfloor$, then $j\geq n$.
\end{proposition}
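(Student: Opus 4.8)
The plan is to prove the contrapositive: for every integer $j$ with $1\le j\le n-1$ one has $\binom{m+n}{j}-\binom{m+n}{j-1}-\binom{m}{j}>0$. Note first that $j<n\le m$ forces $2j\le 2(n-1)\le m+n-2$, so any such $j$ automatically satisfies $0<j\le\lfloor\frac{m+n}{2}\rfloor$; hence proving strict positivity on $1\le j\le n-1$ is exactly the negation of the proposition's hypothesis. I would then rewrite the quantity with the Vandermonde convolution $\binom{m+n}{j}=\sum_{k=0}^{j}\binom{m}{j-k}\binom{n}{k}$: cancelling the $k=0$ term and reindexing $\binom{m+n}{j-1}=\sum_{k=1}^{j}\binom{m}{j-k}\binom{n}{k-1}$ gives
\begin{equation*}
\binom{m+n}{j}-\binom{m+n}{j-1}-\binom{m}{j}=\sum_{k=1}^{j}\binom{m}{j-k}\left(\binom{n}{k}-\binom{n}{k-1}\right)=\sum_{k=1}^{j}\binom{m}{j-k}\,s_{n,k},
\end{equation*}
with $s_{n,k}$ as in Lemma~\ref{lem_s_unimodal}. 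So it suffices to show this signed sum is strictly positive for $1\le j\le n-1$.

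The cleanest route is combinatorial, through the ballot interpretation already noted in Section~\ref{sec_binomial}: $\binom{m+n}{j}-\binom{m+n}{j-1}$ counts the sequences in $\{+1,-1\}^{m+n}$ with exactly $j$ entries $+1$ and all partial sums $\le 0$ (this reflection count is valid because $j<\frac{m+n}{2}$). I would define an injection from the $\binom{m}{j}$ sequences in $\{+1,-1\}^{m}$ with $j$ up-steps into this ballot set by prepending $n$ down-steps. The only verification needed: after the prepended block every partial sum equals $-n,-n+1,\dots,-1$ hence is $\le 0$, and after $n+i$ further steps the partial sum is $-n+(\epsilon_1+\cdots+\epsilon_i)\le -n+j\le 0$, since the number of $+1$'s among the first $i$ entries never exceeds $j$ and $j\le n$. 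Injectivity is trivial (delete the first $n$ entries). This already yields $\binom{m}{j}\le\binom{m+n}{j}-\binom{m+n}{j-1}$ whenever $j\le n$.

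For strictness when $1\le j\le n-1$ — which in particular forces $n\ge 2$ — I would observe that the image of the injection is exactly the set of ballot sequences whose first $n$ entries are all $-1$, and exhibit one that is not: the sequence consisting of $j$ copies of the block $(-1,+1)$ followed by $m+n-2j$ copies of $-1$. It has length $m+n$, exactly $j$ up-steps, partial sums $-1,0,-1,0,\dots$ and then decreasing (all $\le 0$), and $m+n-2j\ge 2>0$; its second entry is $+1$, so since $n\ge 2$ its first $n$ entries are not all $-1$, and it escapes the image. Hence $\binom{m}{j}<\binom{m+n}{j}-\binom{m+n}{j-1}$, which is the claim. The main obstacle is really just careful bookkeeping: getting the ballot count correct (it is $\binom{N}{k}-\binom{N}{k-1}$, which I would confirm by the standard reflection across the line $y=1$) and making sure the failure of surjectivity is tied precisely to $j<n$ forcing $n\ge 2$, so that the explicit witness genuinely works.

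If one prefers to stay purely algebraic, an alternative is to reindex the sum above as $\sum_{k=0}^{j-1}\binom{m}{j-1-k}\big(\binom{n}{k+1}-\binom{n}{k}\big)$, exploit the antisymmetry $\binom{n}{k+1}-\binom{n}{k}=-\big(\binom{n}{n-k}-\binom{n}{n-1-k}\big)$ to pair index $k$ with $(n-1)-k$, and use the elementary fact that $\binom{m}{a}\le\binom{m}{b}$ whenever $0\le a\le b$ and $a+b\le m$ (which applies since here $a+b=2j-n-1<n\le m$) to make each pair's contribution non-negative, after which the $k=0$ term $(n-1)\binom{m}{j-1}>0$ finishes it; this version costs a short case split on whether $2j\le n$ or $2j\ge n+1$, but otherwise needs nothing beyond unimodality of a single row of Pascal's triangle.
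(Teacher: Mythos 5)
Your proof is correct, and it takes a genuinely different route from the paper. The paper argues directly: assuming $\binom{m}{j}\geq\binom{m+n}{j}-\binom{m+n}{j-1}$, it rewrites the inequality as a bound on the product $\prod_{i=0}^{n-1}\bigl(1+\tfrac{j}{m+n-i-j}\bigr)$, drops all but the largest factor, and extracts $j\geq n$ from the resulting one-factor inequality. You instead prove the contrapositive combinatorially: after correctly observing that $1\le j\le n-1$ automatically places $j$ in the admissible range, you use the ballot interpretation of $\binom{m+n}{j}-\binom{m+n}{j-1}$ (sequences with $j$ up-steps and all partial sums $\le 0$), inject the $\binom{m}{j}$ shorter sequences by prepending $n$ down-steps, and exhibit an explicit ballot sequence outside the image to get strictness. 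All the verifications check out: the prepended sequences do stay $\le 0$ because the suffix's partial sums never exceed $j\le n$, the image is exactly the ballot sequences beginning with $n$ down-steps, and your witness $(-1,+1)^j(-1)^{m+n-2j}$ has the right length and up-step count and escapes the image precisely because $j\ge 1$ and $n\ge 2$. Your approach buys a strict inequality and a transparent combinatorial explanation that dovetails with the OEIS/ballot remark already made in Section~\ref{sec_binomial}, at the cost of importing the reflection-principle count; the paper's argument is shorter and purely algebraic but yields only the non-strict contrapositive (which is all the proposition needs). Your Vandermonde decomposition and the sketched pairing alternative are also sound, though the pairing version should be stated carefully (extend the sum to $k=0,\dots,n-1$ using vanishing binomials before pairing $k$ with $n-1-k$); the combinatorial argument alone is a complete proof.
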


\begin{proof}
Suppose by assumption ${m\choose j}\geq{m+n\choose j} - {m+n\choose j-1}$.  This implies
\begin{align*}
\frac{m!}{j!(m-j)!} &\geq \frac{(m+n)!}{j!(m+n-j)!}\left(1-\frac{j}{m+n+1-j}\right)\\
\frac{m!}{j!(m-j)!} &\geq \frac{(m+n)!}{j!(m+n-j)!}\left(\frac{m+n+1-2j}{m+n+1-j}\right)\\
\frac{m+n+1-j}{m+n+1-2j} &\geq \frac{(m+n)(m+n-1)\cdots (m+1)}{ (m+n-j)(m+n-1-j)\cdots (m+1-j)}\\
1+\frac{j}{m+n+1-2j} &\geq \left(1+\frac{j}{m+n-j}\right) \left(1+\frac{j}{m+n-1-j}\right)\cdots \left(1+\frac{j}{m+1-j}\right)
\end{align*}

As each factor in the product is greater than one, we have
\begin{align*}
1+\frac{j}{m+n+1-2j} &\geq1+\frac{j}{m+1-j}\\
 m+1-j &\geq m+n+1-2j \\
 j&\geq  n.
\end{align*}
\end{proof}

Proposition~\ref{prop_less_zero} implies the following corollary we will need to prove our main result.

\begin{corollary}\label{cor_less_zero}
Given integers $m\geq n >0$ and $p,q\geq 0$, if 
\begin{equation*} {m+n\choose j}-{m+n\choose j-1} - {m\choose j} + {m\choose j+q}\leq0
\end{equation*}  for some $0< j \leq \lfloor \frac{m+n}{2} \rfloor$, then \begin{equation*} {m+n\choose j}-{m+n\choose j-1} - {m\choose j} + {m\choose j+q} - {n\choose j} + {n\choose j+q}\leq0 . \end{equation*}
\end{corollary}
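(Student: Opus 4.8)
The plan is to reduce everything to establishing the single inequality ${n\choose j+q}\le{n\choose j}$ under the stated hypothesis. Once this is known, the quantity ${n\choose j+q}-{n\choose j}$ is non-positive, so adding it to the left-hand side of the hypothesized inequality (which is already $\le 0$) immediately yields the desired conclusion. So the real content is the claim about the $n$-row binomial coefficients, and that in turn will come from Proposition~\ref{prop_less_zero}.

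First I would observe that ${m\choose j+q}\ge 0$, so the hypothesis
\begin{equation*}
{m+n\choose j}-{m+n\choose j-1}-{m\choose j}+{m\choose j+q}\le 0
\end{equation*}
forces the smaller quantity ${m+n\choose j}-{m+n\choose j-1}-{m\choose j}$ to be $\le 0$ as well. Since the index still satisfies $0<j\le\lfloor\frac{m+n}{2}\rfloor$ and $m\ge n>0$, Proposition~\ref{prop_less_zero} applies verbatim and gives $j\ge n$.

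Next I would deduce ${n\choose j+q}\le{n\choose j}$ from $j\ge n$ and $q\ge 0$ by a brief case split. If $j>n$ then both sides vanish, because $j>n$ and also $j+q\ge j>n$. If $j=n$, then ${n\choose j}=1$ while ${n\choose j+q}={n\choose n+q}$, which equals $1$ when $q=0$ and $0$ when $q>0$; in either case it is at most $1$. Combining with the first paragraph finishes the argument.

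The step requiring the most care is not a calculation but the bookkeeping around the invocation of Proposition~\ref{prop_less_zero}: one must check that discarding the non-negative term ${m\choose j+q}$ genuinely weakens the inequality in the correct direction, and that the constraint $0<j\le\lfloor\frac{m+n}{2}\rfloor$ is inherited unchanged. After that, the only mild subtlety is the boundary value $j=n$, which is automatically consistent with $j\le\lfloor\frac{m+n}{2}\rfloor$ precisely because $m\ge n$.
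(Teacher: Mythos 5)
Your proof is correct and follows the same route as the paper: the paper's one-line argument likewise notes that the hypothesis implies that of Proposition~\ref{prop_less_zero} (since ${m\choose j+q}\geq 0$), deduces $j\geq n$, and concludes that $-{n\choose j}+{n\choose j+q}\leq 0$. Your version merely spells out the discarding of the non-negative term and the $j>n$ versus $j=n$ case split, which the paper leaves implicit.
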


The assumptions in Corollary~\ref{cor_less_zero} imply those of Proposition~\ref{prop_less_zero}, so $j\geq n$ and thus the quantity $-{n\choose j} +{n\choose j+q}  \leq 0$.

Now we consider the case that the sequence ${m+n\choose j}-{m+n\choose j-1}-{m\choose j}$ is greater than zero.

\begin{proposition}\label{prop_greater_zero}
Given integers $m\geq n>0$, if \begin{equation*}{m+n\choose j}-{m+n\choose j-1}-{m\choose j} >0\end{equation*} for some $1<j \leq \lfloor \frac{m+n}{2} \rfloor$, then \begin{equation*}{m+n\choose j}-{m+n\choose j-1}-{m\choose j} -{n\choose j}\geq 0.\end{equation*}
\end{proposition}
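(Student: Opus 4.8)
The plan is to mimic the algebraic manipulation carried out in the proof of Proposition~\ref{prop_less_zero}, but now starting from the strict inequality ${m+n\choose j}-{m+n\choose j-1}-{m\choose j} > 0$ and extracting from it an upper bound on $j$ strong enough to force ${n\choose j} \le {m+n\choose j}-{m+n\choose j-1}-{m\choose j}$. First I would rewrite ${m+n\choose j}-{m+n\choose j-1} = {m+n\choose j}\bigl(1-\tfrac{j}{m+n+1-j}\bigr) = {m+n\choose j}\cdot\tfrac{m+n+1-2j}{m+n+1-j}$, exactly as before. The hypothesis then reads ${m+n\choose j}\cdot\tfrac{m+n+1-2j}{m+n+1-j} > {m\choose j}$, which after dividing by ${m\choose j}$ and cancelling the common $j!$ becomes
\begin{equation*}
\frac{m+n+1-2j}{m+n+1-j}\cdot\frac{(m+n)(m+n-1)\cdots(m+1)}{(m+n-j)(m+n-1-j)\cdots(m+1-j)} > 1 .
\end{equation*}
Writing each of the $n$ factors in the product as $1+\tfrac{j}{m+k-j}$ for $k=1,\dots,n$ and using that each is at most $1+\tfrac{j}{m+1-j}$, I get $\tfrac{m+n+1-2j}{m+n+1-j}\bigl(1+\tfrac{j}{m+1-j}\bigr)^{n} > 1$; but I actually want a bound in the other direction, so instead I will bound the product from above to turn the strict inequality into a constraint. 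Using $1+\tfrac{j}{m+k-j}\le 1+\tfrac{j}{m+1-j}=\tfrac{m+1}{m+1-j}$ for each factor, the hypothesis forces $\tfrac{m+n+1-2j}{m+n+1-j}\cdot\bigl(\tfrac{m+1}{m+1-j}\bigr)^{n} > 1$, hence in particular $m+1-j>0$, i.e.\ $j\le m$, and more precisely a numeric region near $j\approx (m+n)/2$ is excluded; the surviving range of $j$ will be used in the second half.

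The second half is to show that in the surviving range, ${n\choose j}$ is small enough: concretely, I want
\begin{equation*}
{m+n\choose j}\cdot\frac{m+n+1-2j}{m+n+1-j} - {m\choose j} \ \ge\ {n\choose j}.
\end{equation*}
Here I would divide through by ${n\choose j}$ (note $j\le\lfloor(m+n)/2\rfloor$ so this is harmless when $j\le n$; when $j>n$ the right side is $0$ and only Proposition~\ref{prop_less_zero} applied contrapositively is needed, since the hypothesis being $>0$ already gives $j<n$ by that proposition's contrapositive — wait, Proposition~\ref{prop_less_zero} says if the quantity is $\le 0$ then $j\ge n$, so if it is $>0$ then $j<n$, hence $j\le n-1$ automatically). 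Thus I may assume $1<j\le n-1$ throughout, which makes all three binomials ${m+n\choose j}$, ${m\choose j}$, ${n\choose j}$ lie in the increasing part of their rows. Then ${m+n\choose j} = {m\choose j}+{n\choose j}+\sum_{k=1}^{j-1}{m\choose k}{n\choose j-k}$ by Vandermonde, so the target inequality becomes, after substituting and isolating, a comparison between $\sum_{k=1}^{j-1}{m\choose k}{n\choose j-k}$ and $\tfrac{j}{m+n+1-j}{m+n\choose j}$ plus lower-order terms; since $\tfrac{j}{m+n+1-j}\le \tfrac{j}{m+2}<1$ for $j\le n-1\le m-1$, and the Vandermonde cross-terms are a genuine positive surplus, the inequality should close.

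The main obstacle I anticipate is the bookkeeping in the second half: controlling the Vandermonde cross-sum $\sum_{k=1}^{j-1}{m\choose k}{n\choose j-k}$ against $\tfrac{j}{m+n+1-j}{m+n\choose j}$ cleanly, since both are of comparable size and the inequality is not wildly slack. A cleaner route, which I would try first, is to avoid Vandermonde entirely: write the desired conclusion as ${m+n\choose j}-{m+n\choose j-1}\ge {m\choose j}+{n\choose j}$ and prove it directly by induction on $m+n$ using the Pascal recurrence on all three terms simultaneously — exactly the triangular-array technique of Lemma~\ref{lem_s_unimodal} — restricted to the range $j<n$ where no boundary terms vanish. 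In that formulation the hypothesis ``$>0$'' is only needed to guarantee $j<n$ (via the contrapositive of Proposition~\ref{prop_less_zero}), after which the inequality ${m+n\choose j}-{m+n\choose j-1}\ge {m\choose j}+{n\choose j}$ may well hold unconditionally for $j\le\lfloor(m+n)/2\rfloor$ and can be pushed through by the recurrence together with the base cases $j=0,1$.
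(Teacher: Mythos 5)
There are two genuine gaps here. First, your reduction to $1<j\le n-1$ rests on a logical error: Proposition~\ref{prop_less_zero} states that ${m+n\choose j}-{m+n\choose j-1}-{m\choose j}\le 0$ implies $j\ge n$, whose contrapositive is ``$j<n$ implies the quantity is positive'' --- not, as you assert, ``the quantity is positive implies $j<n$.'' That converse is false: take $m=10$, $n=2$, $j=3$ (which satisfies $1<j\le\lfloor\frac{m+n}{2}\rfloor=6$); then ${12\choose 3}-{12\choose 2}-{10\choose 3}=220-66-120=34>0$ while $j=3>n=2$. So the cases $j\ge n$ really do occur and must be treated. They are not fatal --- for $j>n$ the conclusion is immediate since ${n\choose j}=0$, and for $j=n$ one can use that the hypothesis gives a strictly positive \emph{integer}, hence at least $1={n\choose n}$, which is exactly how the paper disposes of that boundary case --- but your argument as written silently discards them for a reason that does not hold.

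Second, and more seriously, the central inequality is never actually proved. Both of your routes end in a conjecture: the Vandermonde route concludes that ``the inequality should close,'' and the Pascal-recurrence route says the unconditional bound ``may well hold'' and ``can be pushed through.'' That unconditional bound, ${m+n\choose j}-{m+n\choose j-1}\ge{m\choose j}+{n\choose j}$ for $1<j<n$ and $j\le\lfloor\frac{m+n}{2}\rfloor$, is precisely the hard content of the proposition; the paper proves it by dividing through by ${n\choose j}$ and running an induction on $j$, with an explicit computation at the base case $j=2$ (where the quantity reduces to $mn-m-n$ up to a positive factor) and, for the step $j\to j+1$, multiplication by the carefully chosen factor $\frac{(m+n+1-j)(m+n-1-2j)}{(m+n+1-2j)(n-j)}\ge 1$ together with a separate estimate showing the subtracted product does not grow too fast. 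Your instinct that the hypothesis is only needed at the boundary $j\ge n$ and that the interior inequality holds unconditionally is correct and matches the paper's structure, but without executing one of your two sketches (and repairing the case analysis above) the proof is incomplete.
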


\begin{proof}
First, if $j>n$, then ${n\choose j}=0$ and the result holds, so assume $1< j \leq n$; that is, assume ${n\choose j} \not=0$ and ${m+n\choose j}-{m+n\choose j-1}-{m\choose j} \geq 0$.  We wish to show
\begin{equation*}
{m+n\choose j}-{m+n\choose j-1}-{m\choose j}-{n\choose j} \geq 0.
\end{equation*}
But dividing by ${n\choose j}$ and then adding 1 to both sides, the inequality can be rewritten as follows:
\begin{align*}
\frac{(m+n)\cdots (n+1)}{(m+n-j)\cdots (n+1-j)} \left(\frac{m+n+1-2j}{m+n+1-j} \right) -\frac{m(m-1)\cdots (n+1)}{(m-j)(m-1-j)\cdots (n+1-j)} &\geq1 \\
\frac{(m+n)}{n} \cdots \frac{(m+n+1-j)}{(n+1-j)} \left(\frac{m+n+1-2j}{m+n+1-j} \right) -\frac{m}{n}\cdot\frac{(m-1)}{(n-1)}\cdots \frac{(m+1-j)}{(n+1-j)} &\geq1\\
\frac{(m+n)(m+n-1)\cdots (m+n+2-j)(m+n+1-2j)-m(m-1)\cdots (m+1-j)}{n(n-1)\cdots (n+1-j)} &\geq  1
\end{align*}

To show this inequality holds, we induct on $j\geq 2$.
Suppose $j=2$.  Then we have the expression
\begin{equation*}
\frac{(m+n)(m+n-3)-m(m-1)}{n(n-1)}=\frac{2(m+n)}{n}-\frac{n+1}{n-1}.
\end{equation*}
Because $m+n\geq2n$, we have
\begin{equation*}
\frac{2(m+n)}{n}-\frac{n+1}{n-1} \geq 4 - \frac{n+1}{n-1}.
\end{equation*}
Then $\frac{n+1}{n-1}\leq 3$ for $n\geq 2$, so we have the desired result
\begin{equation*}
\frac{(m+n)(m+n-3)-m(m-1)}{n(n-1)}\geq 1.
\end{equation*}
Now by induction assume the inequality
\begin{equation}\label{ineq_j}
\frac{(m+n)\cdots (m+n+2-j)(m+n+1-2j)-m\cdots (m+2-j)(m+1-j)}{n(n-1) \cdots (n+1-j)} \geq  1\\
\end{equation}
holds for $j\geq 2$.
We wish to multiply by a term which preserves the inequality.
First, let us assume the strict inequality $n>j$; that is, $n\geq j+1$.  Then
\begin{align*}
m+n &\geq n+j +1\\
m+n-1-2j & \geq  n-j.
\end{align*}
Clearly $m+n+1-j \geq m+n+1-2j$, so the product
\begin{equation*}
\frac{(m+n+1-j)}{(m+n+1-2j)}\cdot \frac{(m+n-1-2j)}{(n-j)}\geq 1.
\end{equation*}
Thus multiplying the left-hand side of inequality~\ref{ineq_j} by this term maintains the inequality on the right-hand side.  Multiply the denominator by $(n-j)$ to obtain \begin{equation*}n(n-1) \cdots (n+1-j)(n-j),\end{equation*} as desired.  Multiply the numerator by the remaining factor, $\frac{(m+n+1-j)(m+n-1-2j)}{(m+n+1-2j)}$, to obtain 
\begin{equation*}
\resizebox{.98\hsize}{!}{$(m+n)\cdots (m+n+1-j)(m+n-1-2j)-m\cdots (m+1-j)\frac{(m+n+1-j)(m+n-1-2j)}{(m+n+1-2j)}$.}
\end{equation*}
The first summand is as desired for the inductive step, so consider the the second term.  We have
\begin{align*}
0 &\leq n-1 = 2n^2-n-1-2n(n-1) \leq (n-1)(2n+1) -2nj\\
&\leq  (n-1)(m+n+1)-2nj \\
&=  (m+n+1-j)(m+n-1-2j)-(m-j)(m+n+1-2j).
\end{align*}
Thus,
\begin{align*}
(m-j)(m+n+1-2j) & \leq  (m+n+1-j)(m+n-1-2j)\\
m-j & \leq \frac{(m+n+1-j)(m+n-1-2j)}{m+n+1-2j},
\end{align*}
which implies
\begin{equation*}
\resizebox{.98\hsize}{!}{$-m(m-1)\cdots (m+1-j)\frac{(m+n+1-j)(m+n-1-2j)}{(m+n+1-2j)} \leq - m(m-1)\cdots (m+1-j)(m-j)$.}
\end{equation*}
Thus for the inductive step, that is $j+1$, the following inequality holds:
\begin{equation*}
\frac{(m+n)\cdots (m+n+1-j)(m+n-1-2j)- m\cdots (m+1-j)(m-j) }{n(n-1) \cdots (n+1-j)(n-j)} \geq 1
\end{equation*}
We have one other case to consider.  Suppose $j=n$.
Then by assumption
\begin{equation*}
{m+n\choose n} - {m+n\choose n-1} - {m\choose n} >0,
\end{equation*}
therefore
\begin{equation*}
{m+n\choose n} - {m+n\choose n-1} - {m\choose n} - {n\choose n} = {m+n\choose n} - {m+n\choose n-1} - {m\choose n} - 1 \geq 0.
\end{equation*}
 \end{proof}

We may extend Proposition~\ref{prop_greater_zero} as follows:

\begin{corollary}\label{cor_greater_zero}
Given integers $m\geq n>0$ and $p,q\geq 0$, if \begin{equation*}{m+n\choose j}-{m+n\choose j-1}-{m\choose j} +{m\choose j+q}>0\end{equation*} for some $1<j \leq \lfloor\frac{m+n}{2}\rfloor$, then \begin{equation*}{m+n\choose j}-{m+n\choose j-1}-{m\choose j}+{m\choose j+q} -{n\choose j}+{n\choose j+p}\geq 0.\end{equation*}
\end{corollary}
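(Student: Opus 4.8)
The plan is to imitate the short deduction that takes Corollary~\ref{cor_less_zero} out of Proposition~\ref{prop_less_zero}, except that here I will bootstrap from \emph{both} Proposition~\ref{prop_greater_zero} and Proposition~\ref{prop_less_zero}, splitting into cases according to the sign of the ``reduced'' quantity $A := {m+n\choose j} - {m+n\choose j-1} - {m\choose j}$. With this abbreviation the hypothesis of the corollary reads $A + {m\choose j+q} > 0$ and the desired conclusion reads $A + {m\choose j+q} - {n\choose j} + {n\choose j+p} \geq 0$, so the point of the proof is to control the two ``new'' binomial terms ${m\choose j+q}$ and $-{n\choose j}+{n\choose j+p}$.

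First I would dispatch the case $A > 0$. Here the constraint $1 < j \leq \lfloor\frac{m+n}{2}\rfloor$ is exactly the hypothesis of Proposition~\ref{prop_greater_zero}, so that proposition applies verbatim and yields $A - {n\choose j} \geq 0$. Since ${m\choose j+q} \geq 0$ and ${n\choose j+p} \geq 0$, adding these two nonnegative terms preserves the inequality, giving $A + {m\choose j+q} - {n\choose j} + {n\choose j+p} \geq 0$ in this case.

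Next I would handle the case $A \leq 0$. Since $0 < j \leq \lfloor\frac{m+n}{2}\rfloor$, Proposition~\ref{prop_less_zero} applies and forces $j \geq n$. Consequently ${n\choose j} \leq 1$ (it equals $1$ when $j=n$ and $0$ when $j>n$) while ${n\choose j+p} \geq 0$, so $-{n\choose j} + {n\choose j+p} \geq -1$. On the other hand, $A + {m\choose j+q}$ is an integer, being an integer combination of binomial coefficients, and it is strictly positive by hypothesis, hence at least $1$. Adding the two bounds gives $A + {m\choose j+q} - {n\choose j} + {n\choose j+p} \geq 1 - 1 = 0$, as required.

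I do not anticipate a real obstacle: all of the delicate binomial estimates have already been absorbed into Propositions~\ref{prop_less_zero} and~\ref{prop_greater_zero}, and what remains is bookkeeping. The single subtle point is that the hypothesis of this corollary is strictly weaker than that of Proposition~\ref{prop_greater_zero}, because the extra nonnegative summand ${m\choose j+q}$ can push the expression above zero even when $A \leq 0$; this is precisely why the case split is unavoidable and why, in the case $A \leq 0$, one must invoke the integrality of $A + {m\choose j+q}$ rather than a size estimate.
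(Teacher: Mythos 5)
Your proposal is correct and follows essentially the same route as the paper: the paper also splits on the sign of ${m+n\choose j}-{m+n\choose j-1}-{m\choose j}$, invoking Proposition~\ref{prop_greater_zero} in the positive case and Proposition~\ref{prop_less_zero} plus integrality of the strictly positive hypothesis in the nonpositive case. Your write-up is, if anything, slightly more explicit about the integrality step than the paper's.
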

\begin{proof}
If ${m+n\choose j}-{m+n\choose j-1}-{m\choose j} +{m\choose j+q}>0$, there are two possibilities.  In the first case, suppose ${m+n\choose j}-{m+n\choose j-1}-{m\choose j}>0$.  Apply Proposition~\ref{prop_greater_zero} for the desired result:
\begin{equation*}
\resizebox{.98\hsize}{!}{${m+n\choose j}-{m+n\choose j-1}-{m\choose j}+{m\choose j+q} -{n\choose j}+{n\choose j+p} \geq {m+n\choose j}-{m+n\choose j-1}-{m\choose j} -{n\choose j} \geq 0$}
\end{equation*}

Otherwise, suppose ${m+n\choose j} - {m+n\choose j-1} -{m\choose j} \leq 0$.  We apply Proposition~\ref{prop_less_zero} to see the quantity $-{n\choose j}+{n\choose j+p}$ is either negative one or zero.  Applied to the initial assumption, \\${m+n\choose j}-{m+n\choose j-1}-{m\choose j} +{m\choose j+q}>0$, we have
\begin{equation*}{m+n\choose j}-{m+n\choose j-1}-{m\choose j} +{m\choose j+q}-{n\choose j}+{n\choose j+p}\geq0.\end{equation*}
\end{proof}

Now, we are ready to prove rank unimodality.

\begin{proposition}\label{prop_unimodular}
For integers $m,n,p,q > 0$, the rank sequence $(r_i)_{i\geq 1}$ of the poset $C(B_{m,p}\cdot B_{n,q},r)$ of connected, vertex-induced subtrees containing the root is unimodal if $q< m$ or $p<n$.
\end{proposition}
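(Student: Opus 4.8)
The plan is to prove that the first-difference sequence $(d_i)_{i\ge 2}$, $d_i=r_i-r_{i-1}$, is non-negative up to some index and non-positive thereafter; any sequence with this property is unimodal. As in the conventions of Section~\ref{sec_tree} we may assume $m\ge n$ (both the rank sequence of the merge and the hypothesis ``$q<m$ or $p<n$'' are symmetric under interchanging $(m,p)$ with $(n,q)$) and we treat the case $p\le q$, the case $q\le p$ following by the substitution of $p$ and $q$. By the discussion in Section~\ref{sec_tree} we already know $d_i\ge 0$ for $i\le q$ and $d_i<0$ for $i>p+q+1+\lfloor\frac{m+n}{2}\rfloor$, so it remains to control $d_i$ on the window $q<i\le p+q+1+\lfloor\frac{m+n}{2}\rfloor$ and to rule out there a negative difference followed by a later positive one.

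First I would dispose of the short initial stretch $q<i\le p+q+2$, which is exactly where the non-unimodal posets of Proposition~\ref{prop_notunimodular} are excluded. On $q<i\le q+p$, Corollary~\ref{cor_difference} gives $d_i={n\choose i-q-1}-1+{m\choose i-p-1}$; here $i-p-1$ runs over $q-p,\dots,q-1$ and $i-q-1$ over $0,\dots,p-1$, so if $q<m$ then ${m\choose i-p-1}\ge 1$ throughout, and if $p<n$ then ${n\choose i-q-1}\ge 1$ throughout, and in either case $d_i\ge 0$. Using Equation~\ref{eq_jsimplified} at $j=0,1$, the same hypothesis gives $d_{p+q+1}={m\choose q}+{n\choose p}-1\ge 0$ and $d_{p+q+2}={m\choose q+1}+{n\choose p+1}-1\ge 0$. (It is precisely when $q\ge m$ and $p\ge n$ together that $d_{p+q+2}$ becomes $-1$, the obstruction exploited in Proposition~\ref{prop_notunimodular}.)

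For the remaining terms, write $i=p+q+j+1$ with $j\ge 1$ and set $A_j:={m+n\choose j}-{m+n\choose j-1}-{m\choose j}+{m\choose j+q}$, so that by Equation~\ref{eq_jsimplified} we have $d_{p+q+j+1}=A_j+{n\choose j+p}-{n\choose j}$. The key point is that $A_j$ is, up to the shift $j\mapsto q+1+j$, the first-difference sequence of the rank sequence of $C(B_{m,0}\cdot B_{n,q},r)$, which is unimodal by Corollary~\ref{cor_cases}; hence (since $A_0={m\choose q}\ge 0$, $A_1=n-1+{m\choose q+1}\ge 0$, and $A_j<0$ for $j>\lfloor\frac{m+n}{2}\rfloor$ by Section~\ref{sec_tree}) there is an index $K$ with $1\le K\le\lfloor\frac{m+n}{2}\rfloor$ such that $A_j\ge 0$ for $j\le K$ and $A_j\le 0$ for $j>K$. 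I would then transfer this sign behavior to $d_{p+q+j+1}$: when $A_j>0$, Corollary~\ref{cor_greater_zero} gives $d_{p+q+j+1}\ge 0$ at once; when $A_j\le 0$, Proposition~\ref{prop_less_zero} forces $j\ge n$, so ${n\choose j+p}=0\le{n\choose j}$ and $d_{p+q+j+1}\le A_j\le 0$, which is the content of Corollary~\ref{cor_less_zero}. The only borderline case, $A_j=0$ with $j=n\ge 2$, does not arise: a short estimate shows ${m+n\choose n}-{m+n\choose n-1}-{m\choose n}>0$ for $m\ge n\ge 2$, so $A_n>0$, while for $n=1$ the relevant index is $j=1$, already handled. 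Combining with the previous paragraph, $d_i\ge 0$ for all $i\le p+q+K+1$ and $d_i\le 0$ for all $i>p+q+K+1$, so $(r_i)_{i\ge 1}$ is unimodal.

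The step I expect to be the main obstacle is this sign transfer: Corollaries~\ref{cor_less_zero} and~\ref{cor_greater_zero} are the tools built for it, but the bookkeeping near $i\approx p+q$, where the piecewise description of $r_i$ changes, together with confirming that the extra term ${n\choose j+p}-{n\choose j}$ can neither turn a positive difference negative nor a non-positive difference positive, requires care. The genuinely delicate binomial estimates have already been packaged into Propositions~\ref{prop_less_zero} and~\ref{prop_greater_zero}, so what remains is organizing the case split, the junction, and the small indices $j\in\{0,1\}$ and $j=n$.
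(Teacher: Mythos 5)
Your proof is correct and follows essentially the same route as the paper's: non-negativity of the early differences under the hypothesis, the substitution $i=p+q+j+1$, direct verification at $j=0,1$, and the sign transfer via Corollaries~\ref{cor_less_zero} and~\ref{cor_greater_zero} combined with the unimodality of ${m+n\choose j}-{m+n\choose j-1}-{m\choose j}+{m\choose j+q}$ (for which the relevant citation is Proposition~\ref{prop_c_unimodular} rather than Corollary~\ref{cor_cases}). The only organizational difference is that your argument also absorbs the small cases $n=1$ and $m=n=2$ into the same framework, whereas the paper defers them to Lemma~\ref{lemma_cases}.
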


\begin{proof}
As long as either $q< m$ or $p<n$, the sequence of first differences $(d_i)_{i\geq 2}$ is non-negative up to $i=p+q$.  Using the change of variables in Eq.~\ref{eq_jsimplified}, we need to check that the signs of the first differences $d_{q+p+j+1}$ are positive and then possibly negative for $0\leq j \leq \lfloor \frac{m+n}{2} \rfloor$, as we know they are negative for $j> \lfloor \frac{m+n}{2} \rfloor$.  Without loss of generality, suppose $m\geq n$.  Because for $j>1$, Corollaries~\ref{cor_less_zero} and~\ref{cor_greater_zero} show ${m+n\choose j}-{m+n\choose j-1}-{m\choose j}+{m\choose j+q} -{n\choose j} +{n\choose j+p}$ is non-positive or non-negative, respectively, when ${m+n\choose j}-{m+n\choose j-1}-{m\choose j}+{m\choose j+q}$ is non-positive or positive, respectively, we see their signs are positive and negative or possible neutral on the same intervals.  As the sequence ${m+n\choose j}-{m+n\choose j-1}-{m\choose j}+{m\choose j+q}$ is unimodal on the interval in question by Proposition~\ref{prop_c_unimodular} and for $j=2$ the difference $d_{q+p+3} \geq mn-m-n \geq 0$, the first difference of the rank sequences changes sign at most once.
To complete the proof, we need to consider when $j=1$.  As long as either $q<m$ or $p<n$, we note the first difference $d_{q+p+2} \geq 0$ as seen in the proof of Proposition~\ref{prop_notunimodular}.  Further we check that
\begin{equation*}
d_{q+p+1} > {m\choose q}+ {n\choose p}-1 \geq n-1 \geq 0
\end{equation*}
so the difference in rank sequence is positive for $j=0,1,2$ as long as $q<m$ or $p<n$ and $m,n \geq 3$ or $m>n\geq 2$. Thus the sequence of first differences is positive and then negative and therefore the rank sequence is unimodal.
\end{proof}

As we have already discussed the unimodality of the rank sequence $(r_i)_{i\geq 1}$ of the poset $C(B_{m,p} \cdot B_{n,q},r)$ where $p=q=0$ and $q>p=0$, we still have two special cases to consider.

\begin{lemma}\label{lemma_cases}
The rank sequences $(r_i)_{i\geq 1}$ of the posets  $C(B_{m,p} \cdot B_{1,q},r)$ and $C(B_{2,p}\cdot B_{2,q},r)$ are unimodal for any integers $m,p,q> 0$.
\end{lemma}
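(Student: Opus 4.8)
The plan is to use the convolution description of the rank sequence from Lemma~\ref{lemma_stretchmerge}, handling the two families separately.

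\emph{The family $C(B_{m,p}\cdot B_{1,q},r)$.} The key point is that $B_{1,q}$ is a path on $q+2$ vertices, so $C(B_{1,q},r)$ has rank sequence $(\underbrace{1,\ldots,1}_{q+2})$. Writing $(a_\ell)$ for the rank sequence of $C(B_{m,p},r)$ (which is unimodal, being the rank sequence of a broom graph) and extending it by $a_\ell=0$ for $\ell\le 0$, Lemma~\ref{lemma_stretchmerge} identifies the rank sequence of $C(B_{m,p}\cdot B_{1,q},r)$ with the width-$(q+2)$ sliding window sum $r_i=\sum_{\ell=i-q-1}^{i}a_\ell$. Thus it suffices to prove the stand-alone fact that a fixed-width sliding window sum of a nonnegative unimodal sequence is unimodal. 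I would do this by differencing: put $D_\ell=a_{\ell+1}-a_\ell$, so that unimodality of $(a_\ell)$ yields an index $k$ with $D_\ell\ge 0$ for $\ell\le k-1$ and $D_\ell\le 0$ for $\ell\ge k$, and telescoping gives $r_{i+1}-r_i=\sum_{\ell=i-q-1}^{i}D_\ell$, call it $W_i$. Now $W_i\ge 0$ whenever the window $[i-q-1,i]$ lies entirely in $\{\ell\le k-1\}$, $W_i\le 0$ whenever it lies entirely in $\{\ell\ge k\}$, and on the intervening ``transition'' range of indices one has $W_{i+1}-W_i=D_{i+1}-D_{i-q-1}\le 0$, so $W$ is nonincreasing there. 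Hence $W$ is nonnegative and then nonpositive, so $(r_i)$ is unimodal.

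\emph{The family $C(B_{2,p}\cdot B_{2,q},r)$.} By Proposition~\ref{prop_unimodular} I may assume $2\le p\le q$. By the discussion after Corollary~\ref{cor_difference} the first differences $d_i=r_i-r_{i-1}$ are already known to satisfy $d_i\ge 0$ for $i\le q$ and $d_i\le 0$ for $i>p+q+3$, so only the signs of $d_{q+1},\ldots,d_{p+q+3}$ remain. Since ${2\choose k}=0$ for $k\ge 3$, the formulas for these $d_i$ — from Corollary~\ref{cor_difference} when $i\le q+p$, and from Equation~\ref{eq_jsimplified} with $j=i-p-q-1\in\{0,1,2\}$ when $q+p+1\le i\le q+p+3$ — collapse, using ${2\choose 0}={2\choose 2}=1$ and ${2\choose 1}=2$, to short explicit expressions. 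A direct verification, split only according to whether $p$ equals $2$, $3$, or is at least $4$, then shows $d_i\ge 0$ for $q+1\le i\le q+3$ and $d_i\le 0$ for $q+4\le i\le p+q+3$. Consequently $(r_i)$ is nondecreasing up to index $q+3$ and nonincreasing afterwards, hence unimodal.

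The step I expect to be the main obstacle is the sliding-window lemma in the first family: one must check carefully that $W$ is genuinely nonincreasing across the whole transition range, so that it truly changes sign only once rather than dipping below zero and coming back. (A more computational alternative for $n=1$ is to expand $r_i$ directly via Proposition~\ref{prop_ranksequence} with $n=1$ and difference, but the window-sum formulation is cleaner and makes clear that nothing about binomial rows is being used.) In the second family there is no real difficulty, only a finite check; the subtlety to watch is that the index $p+q+1$ lands in the nonnegative block exactly when $p\le 2$, which is why the verification is organized by the size of $p$.
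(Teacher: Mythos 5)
Your proof is correct, but it reaches the two families by routes that differ from the paper's. For $C(B_{m,p}\cdot B_{1,q},r)$ the paper computes the first differences explicitly from Corollary~\ref{cor_difference} and Equation~\ref{eq_jsimplified} and splits into the cases $q<m-1$, $q=m-1$, and $q\ge m$, locating the sign change by hand in each; your sliding-window lemma replaces all of that with one structural fact, and the differencing argument you sketch (the window difference $W_i=a_{i+1}-a_{i-q-1}$ is nonnegative while the window sits in the ascending part, nonpositive once it sits in the descending part, and monotone nonincreasing across the transition since $W_{i+1}-W_i=D_{i+1}-D_{i-q-1}\le 0$ there) is airtight. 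It is also strictly more general: it shows that merging \emph{any} rooted tree with rank-unimodal subtree poset with a path $B_{1,q}$ preserves rank unimodality, which the paper's computation does not reveal. For $C(B_{2,p}\cdot B_{2,q},r)$ the paper instead views the rank sequence as a perturbation of the convolution of two all-ones sequences and tracks how replacing a single $1$ by a $2$ in each factor shifts the difference sequence; your direct evaluation of $d_{q+1},\ldots,d_{p+q+3}$ using ${2\choose k}=0$ for $k\ge 3$ is an equivalent amount of work and I verified the claimed signs (in particular $d_{p+q+1}={2\choose p}+{2\choose q}-1$, which is $\ge 0$ exactly when $p=2$, matching your remark, and $d_{p+q+2}=-1$, $d_{p+q+3}=0$ once $p,q\ge 2$). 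One small point of care: your reduction to $p,q\ge 2$ leans on Proposition~\ref{prop_unimodular} at $m=n=2$; the closing sentence of that proposition's proof mentions the hypotheses ``$m,n\ge 3$ or $m>n\ge 2$,'' but the only inequality actually needed there is $d_{q+p+3}\ge mn-m-n\ge 0$, which holds with equality at $m=n=2$, so the invocation is legitimate (and if one prefers not to rely on it, the cases $p\le 1$ or $q\le 1$ succumb to the same finite check).
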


\begin{proof}
Let $n=1$.  Because $p>0$, we have $p\geq n$, so suppose $m>q>0$. In this case, the first difference $d_i$ is non-negative up through $i=p+q+2$.  If $i>p+q+2$, then
\begin{equation*}
d_i = d_{p+q+j+1} = {m \choose j+q}-{m\choose j-2}
\end{equation*}
for $j>1$.  If $q<m-1$, the difference $d_{q+p+3} \geq 0$ and hence Proposition~\ref{prop_unimodular} implies rank unimodularity of $C(B_{m,p} \cdot B_{1,q},r)$.  If $q=m-1$, we have $d_{q+p+j+1}\leq 0$ for all $j>1$ and hence the difference changes sign exactly once at $j=2$, implying the poset is also rank unimodular.  Now check in the case $q\geq m$.  By Corollary~\ref{cor_difference}, the first difference $d_i$ is positive for $i=2$ up to $i= \max \{p,q\}$.  Then, for $\max \{p,q\} < i \leq q+p$, the first difference \begin{equation*}d_i= {1\choose i-q-1} -1 + {m\choose i-p-1}\end{equation*} is non-negative if and only if $i\leq \max\{q+2, m+p+1\}$, so thus the sign of the first difference changes from positive to negative when $i=\max \{q+2, m+p+1\} +1$. For $i> q+p$, equation~\ref{eq_jsimplified} gives the first difference
\begin{equation*}
d_i = d_{p+q+j+1} = {1\choose j+p} -{1\choose j} + {m\choose j+q} - {m\choose j-2} 
\end{equation*}
which is always less than or equal to zero for $j\geq 0$.  Hence the sequence of differences is positive and then negative so the rank sequence of $C(B_{m,p} \cdot B_{1,q},r)$ is unimodal.  

Next, without loss of generality, assume $p\leq q$ and consider the poset $C(B_{2,p}\cdot B_{2,q},r)$.  The rank sequence of $C(B_{2,p}\cdot B_{2,q},r)$ is the convolution of two sequences $(t_i)_{i\geq 1}$ and $(s_i)_{i\geq1}$ consisting of exactly one two with the remaining terms being ones.  First consider the convolution of the sequence of $(p+3)$ ones with the sequence of $(q+3)$ ones.  At each rank, either we add one, remain neutral, or subtract one from the value of the previous rank, so the sequence of differences is 
\begin{equation*}
(d_i)_{i\geq 2} = 1,1,\cdots, 1, 0, 0, \cdots, 0, -1, -1, \cdots, -1,
\end{equation*}
where there are $(p+2)$ ones, $(q-p)$ zeros, and $(p+2)$ negative ones.  Replacing the $(p+2)^{nd}$ one in the first sequence with a two, that is setting $t_{p+2}=2$, increases the term $d_{p+2}$ in the sequence of differences by one and replacing the $(q+2)^{nd}$ term in the second sequence with a two, that is setting $s_{q+2}=2$, increases the difference $d_{q+2}$ by one.  These terms are either ones or zeros so increasing by one (or possibly two if $p=q$) does not change the sign of the difference sequence from positive to negative.  Further, the only other change to the difference sequence is in the last three terms of the sequence which become $0, -2, -3$.  These terms do not change the sign on the difference sequence from negative to positive, so the difference sequence changes sign exactly one and hence the rank sequence is unimodal.
\end{proof}

Thus, Propositions~\ref{prop_notunimodular} and~\ref{prop_unimodular} with Corollary~\ref{cor_cases} and Lemma~\ref{lemma_cases} provide necessary and sufficient conditions on the integers $m,n,p,q$ to determine the rank unimodality of the subtree poset $C(B_{m,p}\cdot C_{n,q},r)$ and prove Theorem~\ref{thm_seq_conditions}.  In the final section, we enumerate the trees that correspond to a poset $C(B_{m,k}\cdot B_{n,p},r)$ which is not rank unimodal.

\section{Enumeration}\label{sec_enumeration}
As we shall see, there exists an explicit formula to count the number of trees $B_{m,p} \cdot B_{n,q}$ whose posets $C(B_{m,p} \cdot B_{n,q}, r)$ are not rank unimodal.  But first, we will utilize a bijection between two sets.

\begin{proposition}\label{prop_bijection}
For an integer $i\geq 1$, the set of $(2\times i)$ binary matrices with no zero rows or columns, up to row and column permutation is in bijection with the set of integer pairs $(m,n)$ such that $m\geq n\geq 3$ or $m>n=2$ and $m+n \leq i+4$.

\end{proposition}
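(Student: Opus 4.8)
The strategy is to reduce both sides to explicit families of integer triples and then write down a piecewise bijection. On the matrix side, a nonzero column of a $2\times i$ binary matrix is one of $(1,0)^{T}$, $(0,1)^{T}$, $(1,1)^{T}$, so a matrix with no zero column is determined up to column permutation by the triple $(x,y,z)$ recording how many columns of each of these three types occur, subject to $x+y+z=i$; interchanging the two rows swaps $x$ with $y$, so up to row and column permutation the matrix is recorded by such a triple normalized so that $x\le y$. The top row is nonzero exactly when $x+z\ge 1$ and the bottom row exactly when $y+z\ge 1$, and under $x\le y$ the latter follows from the former, so the ``no zero row'' hypothesis is equivalent to excluding the single triple with $x=z=0$. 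First I would record this: the left-hand set is in bijection with $L_{i}:=\{(x,y,z):0\le x\le y,\ x+y+z=i,\ (x,z)\neq(0,0)\}$.

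Next I would partition $L_{i}$ according to whether $x=0$ or $x\ge 1$, mirrored by a partition of the target set (the pairs of Proposition~\ref{prop_notunimodular}) according to whether $n=2$ or $n\ge 3$, and define the bijection branch by branch. For $x\ge 1$, send $(x,y,z)\mapsto(m,n)=(y+2,\ x+2)$; then $3\le n\le m$ and $m+n=x+y+4\le i+4$ since $z\ge 0$, and the inverse $x=n-2,\ y=m-2,\ z=i+4-m-n$ clearly lands back in this branch of $L_{i}$. For $x=0$ we have $z\ge 1$, hence $y=i-z$ ranges over $0,1,\dots,i-1$; send $(0,y,z)\mapsto(m,n)=(y+3,\ 2)$, so $m$ ranges over $3,4,\dots,i+2$, which is exactly $\{m:m>2,\ m+2\le i+4\}$, with inverse $y=m-3$, $z=i+3-m$. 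Since the two branches tile $L_{i}$ and the target set respectively, and the two partial maps are mutually inverse, this gives the claimed bijection; a small-$i$ sanity check ($i=1,2,3,4$ give $1,3,5,8$ elements on both sides) is easy to include.

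The step needing the most care is the boundary case where a column type is absent. The $x=0$ branch must shift by $3$ rather than by $2$: this is forced because the target clause $m>n=2$ is a \emph{strict} inequality, so $(2,2)$ is excluded from the target, while dually the matrix configuration $x=z=0$ (a bottom-heavy matrix with a zero top row) is exactly the triple we excluded from $L_{i}$, and a naive uniform rule $(x,y,z)\mapsto(y+2,x+2)$ would send it to the forbidden pair $(2,2)$. Checking that these two exclusions correspond, that the value ranges of $m$ in the two branches cover the target set with no overlap for every $i\ge 1$, and that the $x=0$ branch behaves correctly at $i=1$, is the only delicate point; everything else is routine manipulation using $z=i-x-y$.
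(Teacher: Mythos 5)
Your proposal is correct and follows essentially the same route as the paper: your triple $(x,y,z)$ of column-type counts is exactly the paper's parametrization by the number of zeros in each row (its $(a,b)=(m-2,n-2)$, resp.\ $(m-3,0)$, matches your two branches, including the shift by $3$ when $n=2$). The only difference is presentational — you make explicit the preliminary step that matrices up to row and column permutation are classified by the normalized triple, which the paper asserts without elaboration.
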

\begin{proof}
We provide a bijective map from the set $B_i$ of integer pairs $(m,n)$ such that $m\geq n \geq 3$ or $m>n=2$ and $m+n\leq i+4$ into the set $A_i$ of $(2\times i)$ binary matrices with no zero rows or columns, up to row and column permutation.
Given an integer $i\geq 1$ and a pair $(m,n)$ from the set $B_i$, suppose $m\geq n>2$.  Set $(a,b)=(m-2,n-2)$.  Then the image of $(m,n)$ is the $2\times i$ matrix with $b$ zeros in the first row and $a$ zeros in the second row as follows:
\[
\begin{bmatrix}
1 & \cdots & 1& 0 & \cdots &0 &1 &\cdots &1\\
0 & \sunderb{2.5em}{a} &0 & 1 & \sunderb{2.5em}{b} &1 &1 &\sunderb{2.5em}{i-a-b} & 1\\
\end{bmatrix}
\]
When $n=2$, set $(a,b)=(m-3,0)$ and map $(m,2)$ to the matrix
\[
\begin{bmatrix}
 1 & \cdots &1 &1 &\cdots &1\\
 0 & \sunderb{2.5em}{a} &0 &1 &\sunderb{2.5em}{i-a} & 1\\
\end{bmatrix}.
\]
This map is injective; for each unique integer pair $(m,n)$ the pair $(a,b)$ is also unique and hence describes a unique matrix up to row and column permutation.  Further, given a $(2\times i)$ matrix from the set $B_i$, the map may be reversed by using row and column permutations to standardize the matrix so the first row is a sequence of ones followed by a sequence of zeros followed by a sequence of ones, and the second row is a sequence of zeros followed by a sequence of ones.  Then the pairs $(a,b)$ are easily determined and can be shifted to pairs $(m,n)$ in $B_i$.
\end{proof}

Set $a_i = |A_i|$.  The sequence $(a_i)_{i\geq 0}$ is a known sequence appearing (with a shift) as sequence \seqnum{A024206} in the OEIS~\cite{OEIS}.  One formula is 
\begin{equation*}
a_{i}  = \left \lfloor \frac{i(i+4)}{4} \right\rfloor.
\end{equation*}

Now, for a given number of vertices in the tree, we will enumerate the trees whose subtree poset $C(B_{m,p}\cdot B_{n,q},r)$ is not rank unimodal.

\begin{theorem}\label{thm_count}
Let $T=B_{m,p}\cdot B_{n,q}$ be a tree which is the merge of two broom graphs for some integers $m,n>0$ and $p,q\geq 0$.  Let $b_i$ be the number of trees $T$ with $i$ vertices such that the poset $C(T,r)$ is not rank unimodal.  We have $b_1=b_2 = \cdots = b_9 =0$, and for $k\geq 0$
\begin{align*}
b_{2k+10} &= 2 \left( \sum_{i=0}^k \left\lfloor \frac{i(i+4)}{4} \right\rfloor \right)  - \left\lfloor \frac{k^2}{4}\right\rfloor\\
b_{2k+11} &= 2 \left(\sum_{i=0}^k \left\lfloor \frac{i(i+4)}{4} \right\rfloor \right)+ \left\lfloor \frac{(k+1)(k+5)}{4} \right\rfloor  - \left\lfloor \frac{k^2}{4}\right\rfloor
\end{align*}
\end{theorem}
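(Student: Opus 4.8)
The plan is to turn the non-unimodality criterion into a count of lattice-point configurations and then evaluate that count using Proposition~\ref{prop_bijection} and summation by parts. First I would make the characterization precise. Write $T=B_{m,p}\cdot B_{n,q}$ with $m\ge n$ (harmless by commutativity of the merge). By Theorem~\ref{thm_seq_conditions}, $C(T,r)$ is \emph{not} rank unimodal exactly when $q\ge m$, $p\ge n$, and the pair $(m,n)$ satisfies $m\ge n\ge 3$ or $m>n=2$; that is, $(m,n)$ is one of the pairs appearing in Proposition~\ref{prop_bijection}. In this situation $p\ge n\ge 2$ and $q\ge m\ge 3$, so both brooms have non-empty handles, the root of $T$ has degree $2$, and the two rooted sub-brooms dangling from the root reconstruct the unordered pair $\{(m,p),(n,q)\}$; conversely distinct such unordered pairs yield non-isomorphic rooted trees. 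Hence $b_i$ is the number of unordered pairs $\{(m,p),(n,q)\}$ with $m\ge n$, $(m,n)$ admissible in the above sense, $p\ge n$, $q\ge m$, and $m+n+p+q+1=i$. Since $p+q\ge m+n$ and the smallest admissible value of $m+n$ is $5$ (attained by $(3,2)$), every such tree has at least $11$ vertices, so $b_1=\dots=b_9=0$ (and $b_{10}=0$) with no further work.

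Next I would set up the enumeration. Substituting $p=n+p'$, $q=m+q'$ with $p',q'\ge 0$ reduces the vertex condition to $p'+q'=i-1-2(m+n)$. A fixed admissible pair with $m>n$ has distinguishable brooms, so each solution $(p',q')$ gives a distinct tree, contributing $\max\{0,\,i-2(m+n)\}$ trees; a fixed admissible pair with $m=n$ (necessarily $m=n\ge 3$) gives a tree depending only on the \emph{unordered} pair $\{p',q'\}$, contributing $\max\{0,\,\lfloor(i-1-4m)/2\rfloor+1\}$ trees. Writing the first kind of contribution as the naive sum over all admissible pairs and subtracting the resulting overcount on the diagonal, I get $b_i=S_i-D_i$, where $S_i=\sum_{(m,n)\text{ admissible},\,m\ge n}\max\{0,\,i-2(m+n)\}$ and $D_i=\sum_{m\ge 3,\;i-1-4m\ge 0}\big\lceil (i-1-4m)/2\big\rceil$.

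To evaluate $S_i$, group admissible pairs by $s=m+n$. By Proposition~\ref{prop_bijection} the number of admissible pairs with $m+n\le s$ is $a_{s-4}$, so the number with $m+n=s$ is $a_{s-4}-a_{s-5}$ (with $a_j:=0$ for $j<0$), where $a_j=\lfloor j(j+4)/4\rfloor$. Thus $S_i=\sum_{s}(i-2s)(a_{s-4}-a_{s-5})$, summed over $s$ with $i-2s>0$; reindexing by $u=s-4$ and applying summation by parts makes this telescoping sum collapse to a partial sum of the $a_j$, with a boundary term governed by the parity of $i$: $S_{2k+10}=2\sum_{u=0}^{k}a_u$ and $S_{2k+11}=2\sum_{u=0}^{k}a_u+a_{k+1}$. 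For $D_i$ the summand is linear in $m$, so $D_i$ is an arithmetic progression; writing $m=3+t$ it becomes $\sum_t (k-1-2t)$ over the appropriate range in both parities, and a standard evaluation (splitting on the parity of $k$ into a sum of consecutive odd resp.\ even numbers) gives $D_{2k+10}=D_{2k+11}=\lfloor k^2/4\rfloor$. Subtracting yields the two stated formulas, and plugging in $k=0$ recovers $b_{10}=0$ and $b_{11}=1$ (the Jacobson--K\'ezdy--Seif tree) as a sanity check.

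I expect the main difficulty to be bookkeeping rather than any isolated hard step: being careful that one is counting rooted trees up to isomorphism (not ordered broom-pairs), which forces the separate diagonal treatment and is the source of the $\lfloor k^2/4\rfloor$ correction; extending $a_j$ by zero below its natural range so that ``the number of admissible pairs of sum $s$ equals $a_{s-4}-a_{s-5}$'' holds uniformly; and carrying the floors, ceilings, and summation limits consistently through the summation by parts and the arithmetic-series evaluation, where the two parities of $i$ (and of $k$) must each be checked.
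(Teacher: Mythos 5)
Your argument is correct and arrives at the stated formulas, but it organizes the count differently from the paper. The paper works recursively on the number of vertices: it injects the non-unimodal trees on $i-1$ vertices into those on $i$ vertices via $(m,n,p,q)\hookrightarrow(m,n,p,q+1)$, identifies the trees missed by this injection with the admissible pairs $(m,n)$ of Proposition~\ref{prop_bijection} (less the $\lfloor k/2\rfloor$ diagonal pairs when $i$ is even, where incrementing $q$ versus $p$ gives isomorphic trees), and then unrolls the recursion $b_{2k+11}=b_{2k+10}+a_{k+1}$, $b_{2k+10}=b_{2k+9}+a_{k}-\lfloor k/2\rfloor$ from the initial values $b_{10}=0$, $b_{11}=1$ of Jacobson--K\'ezdy--Seif. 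You instead count directly for each fixed $i$: group the admissible $(m,n)$ by $s=m+n$, count the handle pairs $(p,q)$ with $p\ge n$, $q\ge m$, $p+q=i-1-m-n$, correct the $m=n$ diagonal for the unordered-pair symmetry, and evaluate by Abel summation against the increments $a_{s-4}-a_{s-5}$. I verified your intermediate claims: the main term does collapse to $S_{2k+10}=2\sum_{u=0}^{k}a_u$ and $S_{2k+11}=2\sum_{u=0}^{k}a_u+a_{k+1}$ (the boundary weight $i-2S$ is $2$ in the even case and $1$ in the odd case, which is exactly what produces the extra $a_{k+1}$), and the diagonal correction is a sum of every other integer from $k-1$ downward, equal to $\lfloor k^2/4\rfloor$ for both parities of $i$ and of $k$. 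Your reduction of the problem to counting unordered pairs $\{(m,p),(n,q)\}$ is also sound, since in the non-unimodal regime $p\ge n\ge 2$ and $q\ge m\ge 3$ force the root to have degree two, making the broom decomposition recoverable from the rooted tree. What each approach buys: the paper's recursion is lighter on bookkeeping and floor/ceiling manipulation, while yours is self-contained (it derives rather than imports $b_{10}=0$ and $b_{11}=1$) and makes transparent why the same $\lfloor k^2/4\rfloor$ correction appears in both parities.
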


\begin{proof}
Every tree with $i$ vertices that is the merge of two brooms may described by an ordered quadruplet $(m,n,p,q)$ of non-negative integers where $m$ gives the number of pendant vertices on the first broom, $n$ gives the number of pendant vertices on the second broom, $p$ describes the length of the handle of the first broom, and $q$ describes the length of the handle of the second broom.  The sum is $m+n+p+q = i-1$.  Without loss of generality we will always assume $m \geq n$.

If the rank sequence of the poset $C(B_{m,p} \cdot B_{n,q})$ is not unimodal, by Theorem~\ref{thm_seq_conditions}, we know $m\geq 3$, $n\geq 2$, $q \geq m$ and $p \geq n$.  First fix the first two terms of the quadruplet $(m,n)$.  Under these restrictions, for each pair $m+n \leq \frac{i-1}{2}$, the number of trees leading to a non-unimodal poset is completely determined, that is, we have the following set of quadruplets describing such trees:
\begin{center}
$\{(m,n,m,i-2m-n-1), (m,n,m+1, i-2m-n-2), 
\ldots, (m,n,i-m-2n-1,n)\}$.
\end{center}
As long as $m\not=n$ each of these trees is unique as a rooted tree up to isomorphism.  If $m=n$, we have the following unique trees:
\begin{center}
$\{(m,n,m,i-2m-n-1), (m,n,m+1, i-2m-n-2), 
\ldots, (m,n, \lfloor \frac{i-m-n-1}{2} \rfloor,\lceil \frac{i-m-n-1}{2} \rceil ) \}.$
\end{center}
We work recursively.  If $i$ is odd, for every tree $(m,n,p,q)$ on $i-1$ vertices there is a corresponding tree on $i$ vertices using the straight-forward injection
\begin{equation*}
(m,n,p,q) \xhookrightarrow{}  (m,n,p,q+1).
\end{equation*} 
Further for each pair $(m,n)$ such that $m+n\leq \frac{i-1}{2}$ there is one additional tree to be counted, that is, the tree with $n$ as the final coordinate, $(m,n, i-m-2n-1,n)$ if $m\not= n$, and the tree $\left(m,m,\frac{i-2m-1}{2}, \frac{i-2m-1}{2}\right)$  if $m=n$.  None of these trees are accounted for in the surjection as their last coordinates are smaller than those given in the map.  
Thus to count the number of trees when $i$ is odd, we may count the number of trees on $i-1$ vertices plus the number of pairs $(m,n)$ such that $m+n\leq \frac{i-1}{2}$.  Setting $i=2k+11$, recall from Proposition~\ref{prop_bijection} the number of such pairs are given by the sequence $(a_{k+1})$.  Therefore for $k\geq 0$,
\begin{equation*}
b_{2k+11} = b_{2k+10} + a_{k+1}.
\end{equation*}
Now suppose $i$ is even.  We still have the injection from the set of trees on $i-1$ vertices to the set of trees on $i$ vertices, however we must be careful when counting additional trees.  If $m=n$ there is no additional tree as the tree $(m,m, \left \lfloor \frac{i-2m-1}{2} \right \rfloor+1, \left \lceil \frac{i-2m-1}{2}\right \rceil-1)$ is isomorphic to the tree $(m,m,\left \lfloor\frac{i-2m-1}{2}\right \rfloor , \left \lceil \frac{i-2m-1}{2}\right \rfloor)$.

Therefore for $i$ even we add the number of trees on $i-1$ vertices plus the number of pairs $(m,n)$ where $m\not = n$.  Setting $i=2k+10$, the number of pairs $(m,n)$ where $m+n\leq \frac{i-1}{2}$ is given by ${a_k}$, and the number of such pairs where $m=n$ is $\lfloor \frac{k}{2}\rfloor$.  Now, we have 
\begin{equation*}
b_{2k} = b_{2k-1} + a _{k} - \left\lfloor \frac{k}{2} \right\rfloor.
\end{equation*}
Now, we repeatedly apply the recursion along with the initial conditions $b_{10}=0$ and $b_{11} =1$ found in by Jacobson, K\'ezdy, and Seif~\cite{Jacobson_Kezdy_Seif}.
\begin{align*}
b_{2k+11} &= b_{2k+10} + a_{k+1}\\
&= b_{2k+9} +a_{k}+ a_{k+1} -\left\lfloor \frac{k}{2}\right\rfloor\\
&= b_{2k+8} +2a_{k}+ a_{k+1} -\left\lfloor \frac{k}{2}\right\rfloor\\
&= b_{2k+7} +a_{k-1}+ 2a_{k}+ a_{k+1} -\left\lfloor \frac{k}{2}\right\rfloor - \left\lfloor \frac{k-1}{2}\right\rfloor\\
&\vdots \\
&= 2(a_1+ a_2 + \cdots a_{k}) + a_{k+1} - \left\lfloor \frac{k}{2}\right\rfloor - \cdots - \left\lfloor \frac{1}{2} \right\rfloor\\
&= 2 \left(\sum_{i=1}^k \left\lfloor \frac{i(i+4)}{4} \right\rfloor \right)+ \left\lfloor \frac{(k+1)(k+5)}{4} \right\rfloor  - \left\lfloor \frac{k^2}{4} \right \rfloor
\end{align*}
Note the identity $\sum_{i=1}^{k} \left\lfloor \frac{i}{2} \right\rfloor = \left \lfloor \frac{k^2}{4} \right\rfloor$ may be found in the entry for sequence \seqnum{A002620} of the OEIS~\cite{OEIS}.

This result on the odd indexed terms then implies for $k\geq 0$,
\begin{equation*}
b_{2k+10}=2 \left(\sum_{i=1}^k \left\lfloor \frac{i(i+4)}{4} \right\rfloor \right)  - \left\lfloor \frac{k^2}{4} \right \rfloor.
\end{equation*}
\end{proof}

Trees leading to a vertex-induced poset that is not unimodal are quite frequent.  Let $t_i$ be the total number of non-isomorphic trees $B_{m,p} \cdot B_{n,q}$ with $i$ vertices, $m,n\geq 0$ and $p,q >0$.  The sequence $(t_i)_{i\geq 1}$ is found in \textit{The Online Encyclopedia of Integer Sequences}~\cite{OEIS} as sequence \seqnum{A005993} and $(b_i)_{i\geq 1}$ is found in sequence \seqnum{A320657}.  Table~\ref{tab_notunimodular} gives the first values of the sequence $(b_i)$ starting with $i=10$, that is the number of trees of a given size whose poset $C(B_{m,p} \cdot B_{n,q},r)$ has a rank sequence which is not unimodal, determined by Theorem~\ref{thm_count}, as well as the total number of non-isomorphic trees $B_{m,p}\cdot B_{n,q}$ with $i$ vertices where $m,n>0$ and $p,q \geq 0$.
\begin{table}
\caption{Initial values for sequences $(b_i)$ and $(t_i)$, respectively, which enumerate the number of non-unimodal, total number respectively, of sequences $(r_i)_{i\geq 0}$ where $m,n,\geq 0$ and $p,q>0$.}
\label{tab_notunimodular}
\begin{center}
\begin{tabular}{c|ccccccccccccc}
\hline\hline\noalign{\smallskip}
$i$  & 10& 11 & 12 & 13 & 14 & 15 & 16 & 17 & 18 & 19 & 20 & 21 & 22\\
\noalign{\smallskip}\hline\noalign{\smallskip}
$b_i$  & 0& 1 & 2 & 5 & 7 & 12 & 16 & 24 & 30 & 41 & 50 & 65 & 77\\
\noalign{\smallskip}\hline\noalign{\smallskip}
$t_i$ & 60 &85 &110 & 146 & 182 & 231 & 280 &344 & 408 &570 &670 & 770 &891\\
\noalign{\smallskip}\hline
\end{tabular}
\end{center}
\end{table}

%
%
%
%
%

\end{document}